\theoremstyle{definition}
\newcommand{\bd}{\left|\begin{matrix}}
\newcommand{\ed}{\end{matrix}\right|}
\newcommand{\tr}{{}\;^\top\!\!}
\newtheorem{thm}[equation]{Theorem}
\newtheorem{cor}[equation]{Corollary}
\newtheorem{prop}[equation]{Proposition}
\newtheorem{lem}[equation]{Lemma}
\newtheorem{rmk}[equation]{Remark}
\newtheorem{defn}[equation]{Definition}
\numberwithin{equation}{section}
\newcommand{\bpm}{\begin{pmatrix}}
\newcommand{\epm}{\end{pmatrix}}
\newcommand{\bsm}{\begin{smallmatrix}}
\newcommand{\esm}{\end{smallmatrix}}
\newcommand{\bspm}{\left(\begin{smallmatrix}}
\newcommand{\espm}{\end{smallmatrix}\right)}
\newcommand{\bm}{\begin{matrix}}
\renewcommand{\em}{\end{matrix}}
\newcommand{\bbm}{\begin{bmatrix}}
\newcommand{\ebm}{\end{bmatrix}}
\newcommand{\C}{\mathbb{C}}
\newcommand{\Z}{\mathbb{Z}}
\newcommand{\D}{\mathbb{D}}
\newcommand{\R}{\mathbb{R}}
\newcommand{\N}{\mathbb{N}}
\newcommand{\M}{\mathbb{M}}
\newcommand{\Tr}{\operatorname{Tr}}
\newcommand{\Ad}{\operatorname{Ad}}
\newcommand{\pr}{\operatorname{pr}}
\newcommand{\Mer}{\operatorname{Mer}}
\newcommand{\sym}{\operatorname{sym}}
\newcommand{\Span}{\operatorname{Span}}
\newcommand{\tx}{\text}
\newcommand{\ptl}[2]{\frac{\partial #1}{\partial #2}}
\newcommand{\dd}[2]{\frac{d #1}{d #2}}
\newcommand{\ddt}{\dd{}t}
\newcommand{\ve}{\varepsilon}
\newcommand{\on}{\operatorname}
\newcommand{\ol}{\overline}
\newcommand{\ul}{\underline}
\newcommand{\g}{\mathfrak{g}}
\newcommand{\fg}{\mathfrak{g}}
\newcommand{\fk}{\mathfrak{k}}
\newcommand{\fp}{\mathfrak{p}}
\newcommand{\fr}{\mathfrak{r}}
\newcommand{\fu}{\mathfrak{u}}
\newcommand{\fh}{\mathfrak{h}}
\newcommand{\f}{\mathfrak}
\newcommand{\fsl}{\mathfrak{sl}}
\newcommand{\fgl}{\mathfrak{gl}}
\newcommand{\wt}{\widetilde}
\newcommand{\Mat}{\on{Mat}}
\newcommand{\ad}{\on{ad}}
\newcommand{\cJ}{\mathcal{J}}
\renewcommand{\c}{\mathcal}
\newcommand{\gm}{\gamma}
\newcommand{\Sp}{\wt{Sp}}
\newcommand{\Gm}{\Gamma}
\newcommand{\sg}{\sigma}
\newcommand{\Ja}{G^{(n,j)}}
\newcommand{\jla}{\fg^{(n,j)}}
\newcommand{\JC}{\wt{G}^{(n,j)}}
\renewcommand{\t}{\,^\top \!}
\begin{document}
\author{Joseph Hundley} \email{jhundley@math.siu.edu}
\address{Math. Department, Mailcode 4408,
Southern Illinois University Carbondale, 1245 Lincoln
Drive Carbondale, IL 62901}
\thanks{Most of this research was completed in 2006-07 while the author was a post-doc at Pohang University of Science and Technology and then at Tel Aviv University.  He would like to thank both institutions for their hospitality, as well as his hosts, Professor YoungJu Choie and Professor David Ginzburg, respectively.  He would also like to thank Professor Choie for suggesting the problem and for many useful discussions.
This paper was written while the author was supported
by NSF Grant DMS-1001792.}
\title{On some results of Bump-Choie and Choie-Kim}
\date{\today}
\maketitle
\tableofcontents
\section{Introduction}
The purpose of this note is to give 
a representation-theoretic interpretation for the results 
of Choie and Kim \cite{Choie-Kim}, continuing a program 
 begun by Bump and Choie in \cite{Bump-Choie}.
The motivation for \cite{Choie-Kim} is to generalize a certain result
of Bol \cite{Bol} from elliptic modular forms to Siegel and Jacobi forms.
An interesting consequence is that the $(r+1)^{\text{\ul{st}}}$ derivative
of a meromorphic modular form of weight $-r$ is a meromorphic
modular form of weight $r+2.$  This is striking because no other 
derivative of the same modular form will be modular.  More
precisely, each derivative is expressed as a linear combinations
of forms of various weights, only the $(r+1)^{\text{\ul{st}}}$ derivative
has the property that the contributions from all weights save one 
are zero.  As explained in \cite{Bump-Choie}, it is possible to 
consider instead a slightly different differential operator 
(the Maass raising operator) such that all derivatives 
are automorphic forms, but none save the $(r+1)^{\text{\ul{st}}}$ 
is meromorphic.

The results of \cite{Choie-Kim}
extend Bol's result to Siegel modular forms as well as Jacobi 
forms, subject to certain restrictions.  
For the Siegel case, 
one considers a meromorphic Siegel modular form 
of genus $n$ and weight $-r+\frac{n-1}2.$ 
The usual derivative
$\frac{d}{dz}$ of a modular form in the upper half plane is 
replaced by  the ``determinant differential 
operator'' in the Siegel case. 
Just as before, one finds that  the $(r+1)^{\text{\ul{st}}}$
iterate of this operator, yields a 
meromorphic Siegel or Jacobi form of weight $r+2+\frac{n-1}2.$  Further
one does not get a meromorphic modular form for any 
derivative other than  the $(r+1)^{\text{\ul{st}}}.$
The Jacobi case is similar.
Let us refer to this as ``recovery at $r+1.$''

The Siegel case is considered from another perspective in \cite{Bump-Choie}, and 
the important task of translating the phenomenon 
into a representation-theoretic 
context is carried out. 
It is explained how  certain vectors in $(\mathfrak{sp}(2n,\C), \wt U(n))$-modules generalize meromorphic modular forms of negative weight, 
and how a conjecture (Conjecture 2 of \cite{Bump-Choie})
would explain and also generalize the natural extension (Conjecture 1 of 
\cite{Bump-Choie}) of recovery at $r+1$.

The goals of this paper are as follows:  first, to prove 
Conjecture 2 of \cite{Bump-Choie}; second, to generalize Theorem 3 of 
\cite{Bump-Choie}; third, to give a similar treatment of the Jacobi 
case of \cite{Choie-Kim}.

Two proofs of Conjecture 2 of \cite{Bump-Choie}
are given.  The first, short proof, is based on 
deducing the conjecture from general results of 
  \cite{Boe}, which 
apply to any 
semisimple complex 
Lie algebra.
Since the main program is to make the representation-theoretic
explanations for the ``recovery'' behavior of Siegel and Jacobi
modular forms as transparent as possible, a detailed proof for the 
case $\mathfrak{sp}(2n, \C)$ is included. 
A second, longer, proof, along the lines of that given in 
\cite{Bump-Choie} for the case $n=2$ is then presented.  

The generalization of theorem 3 of \cite{Bump-Choie} is given in section \ref{section: other elements of the center}.  It helps to explain why, regardless of the the genus $n,$
one is able to prove Conjecture 2 using only the Laplace-Beltrami operator, without considering any other elements of the center of the universal enveloping algebra.

In section \ref{section: jacobi case notation}, notation 
for the Jacobi case is introduced.  The main theorem
in the Jacobi case, theorem \ref{Main thm: jacobi}, is then given in section \ref{section: Main result a la Choie-Kim}.  It generalizes corollary 3.6 of \cite{Choie-Kim}.  The proof is based on extending Jacobi forms to functions on a large Siegel upper half-space and applying results from the Siegel case.  In section \ref{section:  Alternate approach, a la Bump-Choie}, theorem \ref{Jacobi U(g) version of Conjecture 2}, which may be regarded as a Jacobi version of Conjecture 2 is stated and proved.  In principle, 
this result could be the basis for a second proof of theorem \ref{Main thm: jacobi}, but 
it's not entirely trivial: like Conjecture 2
of \cite{Bump-Choie}, theorem \ref{Jacobi U(g) version of Conjecture 2} deals with certain elements of the 
universal enveloping algebra which correspond to 
Maass's raising operators.  The relationship between 
these raising operators and the heat operator and 
determinant differential operator considered in \cite{Choie-Kim} is somewhat indirect.  
In the Siegel case, the relationship is given in 
lemma \ref{M is basically D}, which relies on 
results of Harris and Maass.  
In order to deduce theorem \ref{Main thm: jacobi}
from theorem \ref{Jacobi U(g) version of Conjecture 2}, 
some Jacobi
version of this lemma would be required.
Even though no second proof of theorem \ref{Main thm: jacobi} is obtained, it seems that theorem \ref{Jacobi U(g) version of Conjecture 2}, and particularly its proof, may be of sufficient independent interest to warrant inclusion.  
The proof is based on embedding a smaller symplectic
Lie algebra in the universal enveloping algebra of the Jacobi Lie algebra in a nonstandard way, and 
applying results from the Siegel case to this 
subalgebra.  

\section{Reduction}\label{s: reduction}
As mentioned above, this paper attempts to continue a program 
begun in \cite{Bump-Choie}.  
Accordingly, we make free use of notation and  terminology 
introduced in  \cite{Bump-Choie}, and generally begin not
at the beginning, but where \cite{Bump-Choie} leaves off.

To briefly recall the statement of Conjecture 2, recall that
a $(\g_\C, \wt K)$-module is a vector space equipped with 
actions of $\g_\C:= \frak{sp}(2n, \C)$  and of $\wt K=\wt U(n),$
subject to certain compatibility conditions described on 
\cite{Bump-Choie}, pp. 116-17.
Here $\wt U(n)$
 is the preimage of the unitary group $U(n) \subset Sp(2n, \R)$ 
in the metaplectic double cover $\wt {Sp}(2n, \R)$ of $Sp(2n, \R).$ 

Let $(\pi, V)$ be such an object.  A vector  $v \in V$ is said to be semispherical
if the action of $\wt K$ preserves the one dimensional subspace $v$ 
spans.
In this case $\wt K$ acts on $v$ by a character.
Following \cite{Bump-Choie}, p. 121, we denote this 
character by $\det^{k},$ with $k$ being a half integer.
The vector $v$ is said to be holomorphic if it is semispherical and, 
in addition, annihilated by certain differential operators $\pi(L_X)$ 
described on \cite{Bump-Choie}, p. 120. 
Rather than the determinant differential operator,
one works with a different, but closely 
related differential operator, 
namely the Maass raising operator 
 $M_+$  defined on p. 123 of \cite{Bump-Choie}.
 Conjecture 2 of \cite{Bump-Choie} then states that if 
 $v$ is a holomorphic vector of weight $-r+\frac{n-1}2,$ then $M_+^{r+1} v$
 is holomorphic of weight $r+2+\frac{n-1}2.$

In order to prove this, it is useful to simply forget the $\wt K$-module
structure, and regard $V$ as a module over the ring $U(\g_\C).$  
Let $\fk$ denote the Lie algebra of $\wt K.$  
Then the compatibility 
conditions of a $(\g_\C, \wt K)$-module force $\fk$ to act on the 
span of $v$ by a character, $\pi(X) v = k \Tr(X) v,$ where 
$k \in \frac12 \Z$ is the weight, and $\Tr(X)$   denotes the 
trace of $X \in \fk,$ regarded as an element of $\mathfrak{gl}(n, \C),$
as opposed to $\frak{sp}(2n, \R).$ 
To be precise, 
let $c$ be the Cayley transform, as on  \cite{Bump-Choie}, p. 120.
Then for  $X \in \fk,$ the matrix $c X c^{-1}$ is of the form $\bspm X_0 & \\ & {\ol X}_0
\espm,$ with $X_0 \in \mathfrak{su}(n)$ and $\Tr(X)$ denotes
the trace of $X_0.$  
Note that $\ol X_0 = -\,^\top\!\! X_0,$ for $X_0 \in \mathfrak{su}(n).$
It follows by linearity that the vector $v_0 := \pi(c) v$ satisfies 
\begin{equation}\label{wt(k)} \tag{Weight $k$}
\pi\bpm X & \\ & -\tr X \epm v_0 = k \Tr(X) v_0 \qquad \forall X \in \mathfrak{gl}(n, \C).
\end{equation}
Moreover, by holomorphicity, (cf. the definition of $L_X$ on \cite{Bump-Choie}, p. 120)
\begin{equation}\label{hol}\tag{Hol}
\pi\bpm 0 &0 \\X & 0 \epm v_0 = 0,
\qquad \forall \,X \in \Mat_n(\C),\text{ symmetric.}
\end{equation}
Moreover, if $V$ is a $(\g_\C, \wt K)$-module, and $v_0 \in V$ satisfies 
\eqref{wt(k)}, then it follows from the compatibility 
conditions for a $(\g_\C, \wt K)$-module that $v_0$ 
is semispherical of weight $k.$
Hence, Conjecture 2 of \cite{Bump-Choie} reduces to the 
following proposition.
\begin{prop}\label{U(g) version of Conjecture 2}
Let $V$ be a $U(\g_\C)$ module, and suppose that 
$V$ contains a 
vector $v_0$ satisfying \eqref{hol} and 
(\hyperref[wt(k)]{Weight $-r+ \frac{n-1}2$}).  
Then $\hat M_+^{r+1} v_0$ satisfies \eqref{hol} and 
(\hyperref[wt(k)]{Weight $r+2+ \frac{n-1}2$}).
Moreover, if $k \ne r+1,$ then $\hat M_+^k v_0$ 
does not satisfy \eqref{hol}, 
where $\hat M_+ = \det \hat R_{X_{ij}},$ with the same 
convention as on p. 123 of \cite{Bump-Choie}.
\end{prop}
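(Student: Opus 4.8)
The plan is to work entirely inside $U(\g_\C)$ and realize the operator $\hat M_+$ together with the ``lowering'' operators appearing in \eqref{hol} as generators of a copy of $\mathfrak{sl}(2,\C)$ — or more precisely, to exploit the $\mathfrak{sl}(2)$-triple that governs the determinant raising operator in the rank-one direction. Concretely, I would set $E = \hat M_+ = \det \hat R_{X_{ij}}$, let $F$ be the corresponding determinant differential operator built from the symmetric lower-triangular block $\bpm 0 & 0\\ X & 0\epm$, and let $H$ be the weight operator $\pi\bpm X & \\ & -\tr X\epm$ restricted to scalar matrices $X = tI$. The first step is to verify the commutation relations: $[H,E] = 2E$ (with suitable normalization), $[H,F] = -2F$, and — the crucial one — $[E,F] = $ an element of $U(\g_\C)$ that, when applied to any vector satisfying \eqref{hol}, reduces to a scalar linear in the weight $H$. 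This last reduction is where the hypothesis \eqref{hol} does real work: off a holomorphic vector, $[E,F]$ is genuinely complicated, but modulo the left ideal generated by the symmetric lower-triangular elements it collapses.

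Granting the $\mathfrak{sl}(2)$ relations on holomorphic vectors, the argument becomes the standard one for finite-dimensional $\mathfrak{sl}(2)$-representation theory applied to a ``lowest weight'' vector. With $v_0$ satisfying \eqref{hol}, i.e. $F v_0 = 0$, and $H v_0 = (-r + \tfrac{n-1}{2})\cdot(\text{const})\, v_0$ — so that $v_0$ has $\mathfrak{sl}(2)$-weight $-r$ after absorbing the $\tfrac{n-1}{2}$ shift into the normalization — one computes $F\, E^{k} v_0$ by the usual inductive identity $F E^{k} = E^{k} F + k E^{k-1}(H - k+1)$, valid because the $\mathfrak{sl}(2)$ relations hold at each stage (one checks inductively that $E^j v_0$ lies in the span on which the relations are available — this needs $E$ to preserve the relevant weight-graded pieces, which follows from $[H,E]=2E$). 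The coefficient of $E^{k-1} v_0$ is then $k(-r - k + 1)$, which vanishes precisely when $k = r+1$. Hence $F\, E^{r+1} v_0 = 0$, giving \eqref{hol} for $\hat M_+^{r+1} v_0$; and for $k \neq r+1$ with $1 \le k \le r+1$, the coefficient is nonzero, so $\hat M_+^k v_0$ fails \eqref{hol} (for $k > r+1$ one peels off $\hat M_+^{r+1} v_0$, which is a new lowest-weight vector of weight $r+2$, and applies the same computation, or simply notes that $F E^{r+1} v_0 = 0$ forces the higher analysis). The weight statement is immediate from $[H,E]=2E$: each application of $\hat M_+$ raises the $\mathfrak{gl}(n)$-weight by the amount making $-r+\tfrac{n-1}{2} \mapsto r+2+\tfrac{n-1}{2}$ after $r+1$ steps, i.e.\ by $1$ per step in the relevant normalization — this should be checked directly from the definition of $\hat R_{X_{ij}}$ on p.\ 123 of \cite{Bump-Choie} against \eqref{wt(k)}.

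The main obstacle is the bracket computation $[E,F]$ and, more subtly, showing that $\hat M_+^{j} v_0$ for $0 \le j \le r$ still lies in a subspace where the $\mathfrak{sl}(2)$-identity $FE^j = E^j F + jE^{j-1}(H-j+1)$ can be applied — that is, that the intermediate vectors behave like genuine $\mathfrak{sl}(2)$-module elements rather than just satisfying the bracket relation abstractly. The cleanest route is probably to prove a single clean identity in $U(\g_\C)$: that $\hat R_{X_{ij}}$ and the symmetric lowering operators, together with the weight operator, generate (modulo the left ideal $I$ generated by the \eqref{hol} symmetric elements acting from the right appropriately) a homomorphic image of $U(\mathfrak{sl}_2)$, so that $U(\g_\C)/I$ carries the structure needed. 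Alternatively — and this is the route I would actually take for the detailed $\mathfrak{sp}(2n,\C)$ proof promised in the introduction — one verifies by brute-force matrix commutator computation in $\mathfrak{sp}(2n,\C)$ that $\det \hat R_{X_{ij}}$, expanded via the Cayley transform, satisfies exactly the raising-operator identities of \cite{Bump-Choie} for $n=2$ but now in general $n$, with the determinant expansion contributing the factor that turns the naive weight count $-r \mapsto -r + 2(r+1) = r+2$ into the correct $r+2 + \tfrac{n-1}{2}$ via the $\rho$-shift intrinsic to the metaplectic $\det^{1/2}$. I expect the determinant expansion — tracking how $\det$ of a matrix of noncommuting raising operators behaves under the bracket with a single lowering operator — to be the genuinely delicate point, and the place where the value $r+1$ (rather than some other integer) is forced by the size-$n$ of the determinant interacting with the $-r$ weight.
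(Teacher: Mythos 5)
Your plan hinges on an $\mathfrak{sl}_2$-triple $(E,H,F)$ with $E=\hat M_+=\det\hat R_{X_{ij}}$, $F$ the determinant of the lowering operators, and the standard identity $FE^k=E^kF+kE^{k-1}(H-k+1)$; this does not go through for $n\ge 2$, and it is exactly the point where the paper must do something different. There are two separate gaps. First, \eqref{hol} is annihilation by \emph{every} element $\bspm 0&0\\X&0\espm$ of the $\tfrac{n(n+1)}2$-dimensional abelian nilradical $\f L$ --- a family of degree-one conditions --- not the single scalar condition $\det(\text{lowering})v=0$. So even a correct proof that $F\hat M_+^{r+1}v_0=0$ would not establish \eqref{hol} for $\hat M_+^{r+1}v_0$ (the implication only runs the other way, which is why the ``moreover'' half of your argument is less damaged). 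Second, for $n\ge2$ the commutator $[E,F]$ has degree $2n-2$ and is not congruent to anything linear in $H$ modulo the relevant ideal; and even if it collapsed on holomorphic vectors, your induction must evaluate $[E,F]$ on the intermediate vectors $E^jv_0$, which are not holomorphic, so the collapse is unavailable precisely where you need it. The true replacement for the $\mathfrak{sl}_2$ coefficient $k(\mu+k-1)$ is a product of $n$ linear factors (the $b$-function of $\det$ on symmetric matrices, twisted by the weight), which is what makes general $n$ genuinely harder than Bol's original computation. (A small further slip: each application of $\hat M_+$ raises the weight by $2$, not $1$, since $X_{\alpha(i,j)}$ has $\fgl(n)$-weight $d_i^*+d_j^*$ and the determinant therefore has weight $2\Tr$; that part is easily repaired.)

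The paper avoids both difficulties. In the short proof, the role of your $\mathfrak{sl}_2$ computation is played by Boe's theorem on maps between scalar generalized Verma modules: the invariant algebra $U(\fu^-)^{\fg_S}=\C[\hat M_+]$ together with the condition $\lambda=(k-\rho^S(h_\alpha))\omega_\alpha$ pins down exactly when $\hat M_+^k v_0$ is killed by all of $\fu^+$. In the longer proof, the quadratic Casimir replaces your $F$: writing $\Delta=\Delta^{(1)}+2\sum_\alpha c_\alpha X_\alpha Y_\alpha$, centrality forces the eigenvalue $kn(k-n-1)$ on the whole module, while $\Delta^{(1)}\hat M_+^m v_0$ is computable from \eqref{hol} and the weight; the two eigenvalues agree exactly at $m=r+1$, yielding the single quadratic identity $\sum_\alpha c_\alpha X_\alpha Y_\alpha\,\hat M_+^{r+1}v_0=0$, and then an $\fsl(n,\C)$-equivariance and multiplicity-one argument (the cofactor relation $Y_{\alpha(i,j)}w=C\hat M_+^{(i,j)}u$) upgrades that one identity to annihilation by every element of $\f L$. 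It is this last upgrade --- from one invariantly-formed identity to the full condition \eqref{hol} --- for which your proposal has no substitute.
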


\section{Short proof of proposition \ref{U(g) version of Conjecture 2}}
One may give a short proof of proposition \ref{U(g) version of Conjecture 2}
using a modest refinement of theorem 4.4 of \cite{Boe}.  Thus, 
take $\g$ now to be a finite dimensional complex semisimple Lie algebra
as in \cite{Boe}, and let $\fp$ be a hermitian symmetric parabolic subalgebra.  Write $\fp = \fu^+ \oplus \fr$ where $\fu^+$ is the unipotent 
radical of $\fp$ and $\fr$ is reductive, and write $\fr = \g_S + \fh,$ 
where $\g_S$ is the derived subalgebra and $\fh$ is a Cartan subalgebra.
Let  $\lambda: \fh \to \C$ be a  linear mapping  which is trivial on 
 $\fh \cap \g_S,$ 
 and let $\pr$ be the natural projection $\fp \to \fh/(\fh \cap \g_S).$
 One considers those vectors $v$ in a $U(\g)$-module $V$
 which satisfy the following 
  natural generalization of \eqref{hol} and
 \eqref{wt(k)}: \begin{equation}\label{hol wt lambda}
 \tag{Hol wt $\lambda$}
 X v = \lambda( \pr(X)) v\qquad( \forall X \in \fp).
 \end{equation}
 Then one has, from \cite{Boe}, a necessary and sufficient 
 condition for a representation which is generated by a vector 
 which satisfies \eqref{hol wt lambda} to contain a vector which 
 satisfies (\hyperref[Hol wt lambda]{Hol wt $\lambda'$}), 
 where $\lambda' \ne \lambda.$  In order to state this condition, 
 one needs a bit more notation.  First, let $\fu^-$ denote the nilpotent radical of the parabolic subalgebra
 opposite $\fp,$ so that $\fg = \fp \oplus \fu^-,$ and
 let 
 $ \rho^S$ equal half the sum of the roots of $\fh$ in $\fu^+.$
 Next, let $\alpha$ denote the unique noncompact simple root of $\fh,$  
let $\fg_\alpha$ be the $\alpha$-eigenspace of $\fh$ in $\fg,$
let  
$ h_\alpha$ denote the unique element of $[\g_\alpha, \g_\alpha]$
with $\alpha(h_\alpha)=2.$ 
Then the space $\fh/(\fh \cap \g_S)$ is one dimensional and 
spanned by the image of $h_\alpha.$  Let $\omega_\alpha:\fh \to \C$
be the unique element which is  trivial on $(\fh \cap \g_S)$
and sends $h_\alpha$ to $1.$
 \begin{thm}\label{boesThm}
 Keep the notation of the previous paragraph and let 
 $\lambda, \lambda': \fh/(\fh \cap \g_S) \to \C$ 
 be two linear maps.
The following are equivalent.
\begin{enumerate}
\item There exists a $U(\g)$-module $V,$ which is generated by a
vector $e$ which satisfies \eqref{hol wt lambda} 
and contains a second vector $e'$ satisfying 
(\hyperref[Hol wt lambda]{Hol wt $\lambda'$}),
\item There is a nontrivial $U(\g)$-module map from the scalar generalized 
Verma module $U(\g)\otimes_{U(\fp)} \C e_{\lambda'}$ to 
the scalar generalized 
Verma module $U(\g)\otimes_{U(\fp)} \C e_{\lambda},$
\item The algebra of $\g_S$-invariants in $U(\fu^-)$ 
is of the form $\C[u_r],$ and there  
 is a nonnegative integer $k$ such that 
$$
\lambda = (k-\rho^S(h_\alpha))\omega_\alpha, \qquad 
\lambda' = (-k-\rho^S(h_\alpha))\omega_\alpha.
$$
\end{enumerate}  
Moreover, when the three equivalent conditions hold, 
the vector $e'$ mentioned in (1) is a scalar multiple of $u_r^k \,.\,e,$ 
and the map mentioned in (2) sends $1\otimes e_{\lambda'}$ to a scalar 
multiple of $u_k^r \otimes e_\lambda.$ (Note that this 
map is completely determined by its value on $1\otimes e_{\lambda'}.$)
\end{thm}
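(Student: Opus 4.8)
The plan is to reduce the whole statement to the analogous one about scalar generalized Verma modules. Write $N(\mu):=U(\g)\otimes_{U(\fp)}\C e_\mu$. By Frobenius reciprocity, $N(\mu)$ is the universal $U(\g)$-module generated by a vector satisfying \eqref{hol wt lambda} (with $\mu$ in place of $\lambda$): for any $U(\g)$-module $V$, $\Hom_{U(\g)}(N(\mu),V)$ is canonically the space of vectors in $V$ satisfying that condition, so ``$V$ is generated by such a vector'' is the same as ``$V$ is a quotient of $N(\mu)$.'' Granting this, $(2)\Rightarrow(1)$ is immediate: take $V=N(\lambda)$ and let $e'$ be the image of $1\otimes e_{\lambda'}$ under a nonzero homomorphism. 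The equivalence $(2)\Leftrightarrow(3)$, together with the identification of the map $N(\lambda')\to N(\lambda)$ with $1\otimes e_{\lambda'}\mapsto c\,u_r^k\otimes e_\lambda$, reproduces Theorem 4.4 of \cite{Boe} in the present (Hermitian symmetric, scalar-coefficient) case; for $\g=\mathfrak{sp}(2n,\C)$ it is also obtained by the explicit analysis in the later sections, so I would simply quote it. The refinement over \cite{Boe} thus consists in adjoining $(1)$ and in phrasing the last clause in terms of the vector $e'$, and both come down to showing that a quotient of $N(\lambda)$ can contain a vector of type $(\text{Hol wt }\lambda')$ only if $N(\lambda)$ itself does.

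To prove $(1)\Rightarrow(2)$ I would argue as follows. Since $\fp$ is Hermitian symmetric, $\fu^-$ is abelian, so $U(\fu^-)=S(\fu^-)$ and the PBW theorem gives $N(\lambda)\cong S(\fu^-)\otimes\C e_\lambda$, graded by $\fu^-$-degree, with $\g_S$ acting on the $S(\fu^-)$-factor; the eigenvalue of the semisimple element spanning the center of $\fr$ is strictly monotone in the degree, hence separates degrees. Now let $V=N(\lambda)/M$ and suppose $0\neq e'\in V$ satisfies $(\text{Hol wt }\lambda')$. From \eqref{hol wt lambda} one reads off that $e'$ is annihilated by $\g_S$ and by $\fu^+$ and that its $\fh$-weight is trivial on $\fh\cap\g_S$, i.e.\ a multiple of $\omega_\alpha$; such a ``central'' weight occurs in only one degree $d$, and there the weight space of $N(\lambda)$ is spanned by $u_r^k\otimes e_\lambda$ with $d=k\deg u_r$, because the powers of $u_r$ are the only elements of $S(\fu^-)$ carrying a central weight (this uses $S(\fu^-)^{\g_S}=\C[u_r]$ and the multiplicity-freeness of $S(\fu^-)$ over $\g_S$, part of the Hermitian symmetric structure theory). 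Hence $e'$ lifts, within degree $d$, to a nonzero multiple $w$ of $u_r^k\otimes e_\lambda$, and $w\notin M$. It remains to see that $\fu^+\cdot w=0$ in $N(\lambda)$, not merely modulo $M$. For this I invoke the structure of $N(\lambda)$ in the Hermitian symmetric case (cf.\ \cite{Boe}): every submodule is generated by the singular vectors $u_r^{k_i}\otimes e_\lambda$ it contains, and such a vector, being $\fu^+$-annihilated and $\fr$-semi-invariant, generates $U(\fu^-)\cdot(u_r^{k_i}\otimes e_\lambda)$, which lives in $\fu^-$-degrees $\ge k_i\deg u_r$; so $M$ vanishes in every degree below $(\min_i k_i)\deg u_r$. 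If $k\ge k_i$ for some $i$, then $u_r^k\otimes e_\lambda=u_r^{\,k-k_i}\cdot(u_r^{k_i}\otimes e_\lambda)\in M$, contradicting $w\notin M$; hence $k<k_i$ for all $i$, and then $\fu^+\cdot w$, which sits in degree $k\deg u_r-1$ and, since $\fu^+e'=0$ in $V$, lies in $M$, must be $0$. Thus $u_r^k\otimes e_\lambda$ satisfies $(\text{Hol wt }\lambda')$ in $N(\lambda)$, which by universality is a nonzero $U(\g)$-map $N(\lambda')\to N(\lambda)$ sending $1\otimes e_{\lambda'}$ to $u_r^k\otimes e_\lambda$; pushing this vector to $V$ recovers $e'$ up to a nonzero scalar, so the final clause holds both in the form stated for $e'$ and in the form stated for the map.

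The substantive obstacle is not the bookkeeping above but the structural input it rests on: that the central weight spaces of $N(\lambda)$ are one-dimensional and spanned by powers of $u_r$, and that the submodules of $N(\lambda)$ are exactly those generated by the $u_r$-power singular vectors they contain. This is the rigidity peculiar to Hermitian symmetric parabolics — the same phenomenon responsible for the ``and nowhere else'' assertion in Proposition \ref{U(g) version of Conjecture 2} — and I would either cite it from \cite{Boe} or, in the symplectic case, extract it from the explicit computations carried out later in the paper.
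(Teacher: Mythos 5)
Your handling of (2)$\Rightarrow$(1), of (2)$\Leftrightarrow$(3) via Boe, and of the identification of $e'$ with a multiple of $u_r^k e$ (using $U(\fu^-)^{\g_S}=\C[u_r]$) all track the paper. One small inaccuracy along the way: it is \emph{not} true that the powers of $u_r$ are the only elements of $S(\fu^-)$ whose $\fh$-weight is trivial on $\fh\cap\g_S$ --- already for $\mathfrak{sp}(4,\C)$ the degree-one piece $\fu^-\cong\sym^2\C^2$ contains a vector of $\fh\cap\g_S$-weight zero, while $u_r$ has degree two. What is true, and what you need, is that the powers of $u_r$ are the only $\g_S$-\emph{invariants}; accordingly the lift of $e'$ to $N(\lambda):=U(\g)\otimes_{U(\fp)}\C e_\lambda$ should be produced $\fr$-equivariantly (the quotient $N(\lambda)\to V$ splits as a map of locally finite $\fr$-modules), so that the lift is itself a $\g_S$-invariant weight vector and hence lands in $\C[u_r]\otimes e_\lambda$. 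That slip is repairable.

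The genuine gap is in your (1)$\Rightarrow$(2). Your argument that $\fu^+\cdot w=0$ in $N(\lambda)$ itself, and not merely modulo $M$, rests entirely on the claim that every submodule of $N(\lambda)$ is generated by the scalar singular vectors $u_r^{k_i}\otimes e_\lambda$ it contains. Neither of your proposed sources supplies this. Boe's Theorem 4.4 classifies homomorphisms from \emph{scalar} generalized Verma modules into $N(\lambda)$, i.e.\ the one-dimensional $\fr$-types among the $\fu^+$-annihilated vectors; it says nothing about $\fu^+$-annihilated vectors generating higher-dimensional $\fr$-types (which would produce submodules containing no scalar singular vector of positive degree), nor about whether arbitrary submodules are generated by singular vectors at all. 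The explicit computations later in the paper prove the recovery statement for $\mathfrak{sp}(2n,\C)$, not the submodule lattice of $N(\lambda)$; and the full submodule structure of these modules, while known, is a substantially deeper fact than the theorem you are proving. The paper avoids the issue entirely: once $e'$ is written as $c\,u_r^k e$, it observes that Boe's computation of $X\cdot(u_r^k\otimes e_\lambda)$ for $X\in\fu^+$ is a calculation in $U(\g)$ carried out using only the relations (Hol wt $\lambda$), so it goes through word for word with $e_\lambda$ replaced by any generator $e$ satisfying (Hol wt $\lambda$) in any module $V$ with $u_r^k e\neq 0$ --- no lifting to $N(\lambda)$, and hence no information about the kernel $M$, is required. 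Replacing your final step by this observation closes the gap.
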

\begin{proof}[Sketch of proof]The statement given above goes
slightly beyond what is stated in \cite{Boe}, but the arguments in 
\cite{Boe} prove it.  Indeed, it is clear that (2) $\implies$ (1), 
for if $\phi: U(\g)\otimes_{U(\fp)} \C e_{\lambda'}
\to U(\g)\otimes_{U(\fp)} \C e_{\lambda}$ is any nonzero map, 
then (1) is satisfied with $V=U(\g)\otimes_{U(\fp)} \C e_{\lambda},
e=e_\lambda$ and $e' = \phi(e_{\lambda'}).$  
Further, if $V$ is any $U(\fg)$-module generated by 
a vector $e$ which satisfies \eqref{hol wt lambda}, then $V = U(\fu^-)e.$
Arguing as on p. 796 of \cite{Boe}, if $u \in U(\fu^-)$ is such that 
 $X\,.\,(u \,.\, e) = \lambda'(\pr(X)), \, (\forall X \in \fr),$ then it follows that $u \in U(\fu^-)^{\fg_S},$
 and $\ad(X) u = ( \lambda' - \lambda)(\pr(X)) \cdot u, \, (\forall X \in \fr).$
   By proposition 4.2 of \cite{Boe}, the space $ U(\fu^-)^{\fg_S}$ 
 is either $\C,$ or else it is $\C[u_r],$ for a certain element $u_r.$
 which satisfies 
 $\ad(X) u_r = -2\omega_\alpha(X) u_r$ (cf. the computation of ``$\mu_r$''
 at the end of the proof of theorem 4.4).
 
 The heart of the proof of \cite{Boe} theorem 4.4 is a proof that 
 $u_r^k \otimes e_{\lambda}$ is annihilated by $\fu^+$ if and only if 
 $\lambda = (k-\rho^S(h_\alpha))\omega_\alpha.$  It goes through 
 word for word if we replace $ u_r^k \otimes e_{\lambda}$ 
 by $u_r^k e$ for any $e$ which satisfies \eqref{hol wt lambda},
 provided $u_r^k e\ne 0.$
 This proves that (1) $\implies$ (3), and that (3) $\implies$ (2).  
\end{proof}
 
 \begin{proof}[Proof of proposition \ref{U(g) version of Conjecture 2}]
 Proposition  \ref{U(g) version of Conjecture 2} is a fairly direct 
 application of theorem \ref{boesThm}.  If $c$ is the 
 Cayley transformation as in \cite{Bump-Choie}, one takes $\fg = \mathfrak{sp}(2n, \C),$ and 
 \begin{equation}\label{definition of frak p}
\fp = \left\{ c^{-1}\bpm X_0 &0\\X& -\,^\top\!\! X_0 \epm c:
X_0 \in \mathfrak{gl}(n, \C), \; X \in \Mat_n(\C), \text{ symmetric}\right\}.
\end{equation}
$$
\fu^- = \left\{c^{-1}\bpm 0&X \\ 0&0 \epm c: X \in \Mat_n(\C), \text{ symmetric} \right\}.
$$
\begin{equation}\label{fr and fgS}
\fr = \left\{c^{-1} \bpm X_0 &0\\0& -\,^\top\!\! X_0 \epm c:
X_0 \in \mathfrak{gl}(n, \C)\right\}, 
\qquad \g_S = \left\{ c^{-1}\bpm X_0 &0\\0& -\,^\top\!\! X_0 \epm c:
X_0 \in \mathfrak{sl}(n, \C)\right\}.
\end{equation}
The root $\alpha$ is the unique long simple root of $\mathfrak{sp}(2n, \C).$
The element $c^{-1}h_\alpha c$ has a $-1$ at $n,n,$ a $1$ at $2n,2n,$ and 
zeros everywhere else, and $\rho^S(c^{-1} h_\alpha c) = \frac{n+1}2.$
  Also
 $$\omega_\alpha \circ \pr\left[c^{-1}\bpm X_0 &0\\X& -\,^\top\!\! X_0 \epm c\right]
 = -\Tr(X_0).$$
 
 What remains is to check that the generator $u_r$ for $U(\fu^-)^{\fg_S}$
 is the element $M_+$ defined above.
 This is not difficult.  Since $\fu^-$ is commutative, 
 the algebra $U(\fu^-)$ may 
 be identified with the symmetric algebra $S(\fu^-).$
We my identify $X\in \fu^-$ with the linear form 
$Y \mapsto \Tr(XY), \fu^+ \to \C,$ and thus identify 
$S( \fu^-)$ with the algebra of polynomial-functions on $\fu^+.$
 The space of $SL_n(\C)$-invariant symmetric polynomials 
 on $\fu^+$ is well-known  to be generated by 
  the map $\bspm 0&0 \\ X&0 \espm\mapsto \det X,$
  and one easily checks that the corresponding element of $S(\fu^-)$ 
  is $M_+.$
\end{proof}
\begin{rmk}
The preceding proof consists mainly of 
explaining what the key players in 
 Boe's general theory are in the special case 
  $\f g=\f{sp}(2n,\C).$
These details are also worked out in detail in Boe's Ph. D. thesis \cite{Boe2}.
\end{rmk}

\section{Longer proof of proposition \ref{U(g) version of Conjecture 2}}
The purpose of this section is to give a proof of proposition \ref{U(g) version of Conjecture 2} which is along the lines of the proof given for the case
$n=2$ in \cite{Bump-Choie}.
Thus, we return to the notation of section \ref{s: reduction}:  $\fg 
= \mathfrak{sp}(2n, \C),$ etc.
\subsection{Step 1:}  The first step is to show that the weight 
of a holomorphic and semispherical vector determines the 
eigenvalue of the Laplace-Beltrami operator acting on
that vector, and hence on the entire $U(\fg)$-module that it
generates.  Recall the Laplace-Beltrami operator is a differential 
operator of degree two which lies in the center of $U(\g),$
and that these conditions determine it uniquely up to scalar.  
As noted on p. 129 of \cite{Bump-Choie}, a theorem of Harish-Chandra
implies that this operator is the image under the symmetrization 
map $\lambda$  (\cite{Bump-Choie}, p. 128) of an $\ad$-invariant
element of $S(\g).$  Such elements of $S(\g)$ may be identified with $\ad$-invariant polynomial-functions on $S(\g)$ using  the 
$\ad$-invariant bilinear form $(X,Y) := \Tr(XY).$  The Laplace-Beltrami 
operator may then be normalized to be the operator $\Delta$
corresponding to the polynomial-function $A \mapsto \Tr(A^2).$

This operator can be written out explicitly in terms of a basis for $\fg.$ 
Write $e_{i,j}$ for the matrix with $1$ at $i,j$ and zeros everywhere else.
Take $d_i = e_{i,i}-e_{n+i,n+i}$ for each $i=1$ to $n.$  Then 
$\{ d_i: 1 \le i \le n\}$ is a basis for the standard Cartan subalgebra 
of $\g.$  Write $\{ d_i^*: 1 \le i \le n\}$ for the dual basis.  Then the positive
roots of $\fh$ in $\g$ are 
$$
\{ d_i^*-d_j^*: 1 \le i < j \le n\} \cup \{ d_i^*+d_j^*: 1 \le i < j \le n\}\cup 
\{2d_i^*: 1 \le i \le n\}.
$$
Roots from one of the first two sets above are short and roots from the last are long.
For each positive root define $X_\alpha$ to be 
$$
X_\alpha := \begin{cases}e_{i,j}-e_{n+j,n+i}, & \alpha = d_i^*-d_j^*,\\
e_{i, n+j}+ e_{j, n+i}, & \alpha = d_i^*+d_j^*,\\
e_i, n+i, & \alpha = 2d_i^*.\end{cases}
$$
In all cases define $Y_\alpha$ to be the transpose.
Then teasing out the definitions, one obtains
$$\begin{aligned}
\Delta &= \lambda\left( 
\sum_{i = 1}^n d_i^2 + 2\sum_{\alpha \text{ short } } 
X_\alpha Y_\alpha+4\sum_{\alpha \text{ long }} 
X_\alpha Y_\alpha\right)\\
&=\sum_{i = 1}^n d_i^2 + \sum_{\alpha \text{ short } } 
(X_\alpha Y_\alpha+ Y_\alpha X_\alpha) + 2\sum_{\alpha \text{ long }} 
(X_\alpha Y_\alpha+ Y_\alpha X_\alpha)\\
&=\left(\sum_{i=1}^n d_i^2 -\sum_{\alpha} c_\alpha [X_\alpha,Y_\alpha]\right)
+\left( 2 \sum_\alpha c_\alpha X_\alpha Y_\alpha\right),
\end{aligned}
$$
where  the sums are over positive roots $\alpha$ and  $c_\alpha$ is $2$ if $\alpha$ is long and $1$ if $\alpha$ is short.
\begin{lem}
Let $V$ be a $U(\g)$-module generated by  a vector $v$ which satisfies
\eqref{hol} and \eqref{wt(k)}.  Let $\Delta$ denote the Laplace-Beltrami
operator, let 
$$
\Delta^{(1)} = \left(\sum_{i=1}^n d_i^2 -\sum_{\alpha} c_\alpha [X_\alpha,Y_\alpha]\right),
$$
and let $\hat M_+=cM_+c^{-1},$ where $M_+$ is the operator defined on p. 123 of \cite{Bump-Choie}.  
Equivalently, $\hat M_+ = \det \hat R_{X_{ij}},$ with the same 
convention as on p. 123 of \cite{Bump-Choie}.
Then   
\begin{equation}\label{action of del1}
 \Delta^{(1)} M_+^r v = (k+2r)n(k+2r-n-1) M_+^r v, \qquad (r=0,1,2,3,\dots)
 \end{equation}
\begin{equation}\label{action of del}
 \Delta w= kn(k-n-1) w, \qquad (\forall w \in V).
 \end{equation}
\end{lem}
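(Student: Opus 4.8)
The plan is to reduce both identities to a single computation inside $U(\fh)$.  The decisive observation is that $\Delta^{(1)} = \sum_{i=1}^n d_i^2 - \sum_\alpha c_\alpha[X_\alpha, Y_\alpha]$ lies in $U(\fh)$, because each $[X_\alpha, Y_\alpha]$ is an element of the Cartan subalgebra $\fh$; hence $\Delta^{(1)}$ acts by a scalar on every $\fh$-weight vector, and that scalar is a polynomial function of the weight.  The first step is the short direct bracket computation: using $d_i = e_{i,i} - e_{n+i,n+i}$ and the explicit $X_\alpha,\; Y_\alpha = \t X_\alpha$ above, one finds that $[X_\alpha, Y_\alpha]$ equals $d_i - d_j$, $d_i + d_j$, or $d_i$ according as $\alpha$ is $d_i^* - d_j^*$, $d_i^* + d_j^*$, or $2 d_i^*$.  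Recall that, by \eqref{wt(k)} applied with $X = e_{i,i}$, the statement that a vector has weight $k$ means exactly that each $d_i$ acts on it by the scalar $k$; more generally, on any vector on which each $d_i$ acts by a scalar $m$, the operator $\sum_i d_i^2$ acts by $n m^2$, while $\sum_\alpha c_\alpha[X_\alpha, Y_\alpha]$ acts by $0$ on the contributions of the roots $d_i^*-d_j^*$, by $2m$ on each of the $\binom n2$ roots $d_i^*+d_j^*$, and by $2m$ on each of the $n$ long roots $2d_i^*$ (the factor $c_\alpha = 2$ cancelling the halving), i.e.\ altogether by $mn(n+1)$.  Thus $\Delta^{(1)}$ acts on such a vector by the scalar $n m^2 - mn(n+1) = mn(m-n-1)$.

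The second step records that $\hat M_+$ is an $\ad(\fh)$-eigenvector of weight $2\sum_j d_j^*$.  By the description in the statement, $\hat M_+ = \det \hat R_{X_{ij}}$; since $\f n^+ := \left\{\bpm 0 & X \\ 0 & 0\epm : X \text{ symmetric}\right\}$ is abelian, this determinant is an honest sum of $n!$ monomials, each a product, over a permutation $\sigma$, of the raising operators $\hat R_{X_{i,\sigma(i)}}$, and one checks from the conventions of \cite{Bump-Choie} that $\hat R_{X_{i,j}}$ is realized by a root vector of $\ad(\fh)$-weight $d_i^* + d_j^*$ in $\f n^+$.  Hence every monomial, and therefore $\hat M_+$, is an $\ad(\fh)$-eigenvector of weight $\sum_i(d_i^* + d_{\sigma(i)}^*) = 2\sum_j d_j^*$.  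Consequently, as $v$ has weight $k$ (i.e.\ each $d_i$ acts by $k$) by \eqref{wt(k)}, the vector $\hat M_+^r v$ is an $\fh$-weight vector on which each $d_i$ acts by $k+2r$ -- whether or not $\hat M_+^r v = 0$.  Applying the first step with $m = k+2r$ now yields \eqref{action of del1}.

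For \eqref{action of del} I would use the decomposition $\Delta = \Delta^{(1)} + 2\sum_\alpha c_\alpha X_\alpha Y_\alpha$ already obtained above and evaluate it on $v$.  The term $\Delta^{(1)} v$ equals $kn(k-n-1)v$ by the first step with $m=k$.  Every term $X_\alpha Y_\alpha v$ vanishes because $Y_\alpha v = 0$: if $\alpha$ is $d_i^*+d_j^*$ or $2d_i^*$ then $Y_\alpha$ lies in the lower-left symmetric block $\left\{\bpm 0 & 0 \\ X & 0\epm : X \text{ symmetric}\right\}$ and annihilates $v$ by \eqref{hol}; if $\alpha = d_i^*-d_j^*$ then $Y_\alpha = e_{j,i} - e_{n+i,n+j}$ is a trace-zero element of the block-diagonal $\fgl(n,\C)$, so it annihilates $v$ by \eqref{wt(k)} (the right-hand side there being $k\cdot 0 \cdot v$).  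Hence $\Delta v = kn(k-n-1)v$; since $\Delta$ is central in $U(\fg)$ and $V = U(\fg)v$, the same scalar governs the action of $\Delta$ on all of $V$, which is \eqref{action of del}.

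Everything here is elementary, so I do not expect a genuine obstacle, only bookkeeping.  The two places that call for care are: getting the normalization of $[X_\alpha, Y_\alpha]$ and the summation over positive roots exactly right, so that the weight polynomial comes out to be $mn(m-n-1)$; and confirming that $\hat M_+$ is homogeneous of a single $\ad(\fh)$-weight.  The latter is the only point at which the precise identity of $\hat M_+$ enters the argument; it is exactly the property that a non-determinantal $SL_n(\C)$-invariant would lack, but in the present situation the algebra of such invariants is the polynomial ring generated by the single element $\det$ (as recalled in the short proof), so no ambiguity arises.
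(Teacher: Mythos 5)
Your proof is correct and follows essentially the same route as the paper's (very terse) argument: you verify \eqref{action of del1} by computing the scalar by which the $U(\fh)$-element $\Delta^{(1)}$ acts on a weight vector, obtain \eqref{action of del} for $w=v$ from $Y_\alpha v=0$ for every positive root, and extend by centrality. The only difference is that where the paper cites Proposition 5 of \cite{Bump-Choie} to see that $\hat M_+^r v$ has weight $k+2r$, you check directly that $\hat M_+$ is an $\ad(\fh)$-eigenvector of weight $2\sum_i d_i^*$; the bracket computations and the resulting polynomial $mn(m-n-1)$ are all correct.
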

\begin{proof}
One first checks \eqref{action of del1} using \eqref{hol}, together
with Proposition 5 of p.124 of \cite{Bump-Choie}. 
Equation \eqref{action of del}
for $w=v$ then follows since $\Delta-\Delta^{(1)}$ kills any holomorphic vector.   Equation \eqref{action of del} 
then holds for all $w$ because $\Delta$ is in the center of $U(\g).$
\end{proof}
\begin{rmk}
This also gives a second proof that the $U(\g)$-module generated
by a holomorphic vector can contain at most one other linearly independent
holomorphic vector, and that such a vector may exist 
only when the weight of the generator is of the form $-r+\frac{n-1}2,$
in which case the second linearly independent vector must be of weight
$r+2+ \frac{n-1}2.$
\end{rmk}
\begin{cor}
If $v$ and $r$ are as above, and furthermore $k=-r+\frac{n-1}{2}$ 
then $$\left( 2 \sum_\alpha c_\alpha X_\alpha Y_\alpha\right) \hat M^{r+1}_+ v = 0.$$  Here the sum may be taken over all positive 
roots, or only over the roots of $\fh$ in $\f R,$ 
(defined on p. 122 of \cite{Bump-Choie})
since $\hat M^{r+1}_+ v$
satisfies 
(\hyperref[wt(k)]{Weight $r+2+\frac{n-1}2$}). 
\end{cor}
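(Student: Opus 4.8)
The plan is to derive this corollary directly from the preceding lemma, using the two displayed facts about $\Delta$ and $\Delta^{(1)}$ together with the explicit decomposition
$$
\Delta = \Delta^{(1)} + \Bigl(2\sum_\alpha c_\alpha X_\alpha Y_\alpha\Bigr)
$$
recorded just before the lemma. Writing $w = \hat M_+^{r+1} v$, equation \eqref{action of del} gives $\Delta w = kn(k-n-1)w$ for the weight $k = -r + \tfrac{n-1}2$ of the original vector $v$. On the other hand, since $w$ itself satisfies (\hyperref[wt(k)]{Weight $r+2+\frac{n-1}2$}) — this is the content of Proposition \ref{U(g) version of Conjecture 2} — and since $\hat M_+^{r+1} v = M_+^{r+1} v$ up to the Cayley transform bookkeeping, equation \eqref{action of del1} with $r$ replaced by $r+1$ applies to compute $\Delta^{(1)} w$: one gets $\Delta^{(1)} w = (k+2(r+1))n(k+2(r+1)-n-1)w$.

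The next step is the key algebraic observation: with $k = -r+\tfrac{n-1}2$ one has $k + 2(r+1) = r + 2 + \tfrac{n-1}2$, and a short computation shows
$$
\bigl(k+2(r+1)\bigr)n\bigl(k+2(r+1)-n-1\bigr) = kn(k-n-1).
$$
Indeed, writing $a = k - \tfrac{n-1}2 = -r$ and $b = k+2(r+1) - \tfrac{n-1}2 = r+2$, the two eigenvalues are $n(a+\tfrac{n-1}2)(a - \tfrac{n+3}2)$ and $n(b+\tfrac{n-1}2)(b-\tfrac{n+3}2)$; substituting $a = -r$, $b = r+2$ one checks both equal $n(\tfrac{n-1}2 - r)(\tfrac{n+3}2 + r)\cdot(-1)$ after expansion, so they coincide. (This is exactly the ``recovery at $r+1$'' resonance already visible in the remark following the lemma.) Hence $\Delta w$ and $\Delta^{(1)} w$ have the same scalar eigenvalue on $w$, so
$$
\Bigl(2\sum_\alpha c_\alpha X_\alpha Y_\alpha\Bigr) w = (\Delta - \Delta^{(1)}) w = 0,
$$
which is the asserted vanishing.

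For the final sentence of the statement, I would argue that the terms $X_\alpha Y_\alpha$ for $\alpha$ a positive root \emph{not} among the roots of $\fh$ in $\f R$ already annihilate $w$ individually. Such an $\alpha$ is a noncompact positive root, so $Y_\alpha \in \fu^-$ in the notation of the short proof; but a weight vector of weight $r+2+\tfrac{n-1}2$ that also satisfies \eqref{hol} — equivalently, that is killed by the appropriate copy of $\fu^-$ after Cayley transform — is annihilated by each such $Y_\alpha$. (Here one uses that $w$ satisfies (\hyperref[wt(k)]{Weight $r+2+\frac{n-1}2$}), as noted, together with the fact that these $Y_\alpha$ correspond under $c$ to the symmetric lower-triangular block which kills $w$ by holomorphicity of $\hat M_+^{r+1}v$ established in Proposition \ref{U(g) version of Conjecture 2}.) Therefore the sum over all positive roots reduces to the sum over roots of $\fh$ in $\f R$ with no change in value, both being zero.

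The step I expect to require the most care is the eigenvalue matching: one must be careful with the half-integral shift $\tfrac{n-1}2$ and with the convention relating $M_+$, $\hat M_+$, and the power $r$ in \eqref{action of del1}, since an off-by-one in the exponent would break the resonance. Once the bookkeeping is pinned down the identity $(k+2(r+1))n(k+2(r+1)-n-1) = kn(k-n-1)$ is a routine polynomial check, and the rest is immediate from the lemma.
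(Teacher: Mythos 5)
Your main computation is correct and is exactly the paper's argument: decompose $\Delta = \Delta^{(1)} + 2\sum_\alpha c_\alpha X_\alpha Y_\alpha$, evaluate $\Delta$ on $w=\hat M_+^{r+1}v$ via \eqref{action of del} and $\Delta^{(1)}$ via \eqref{action of del1} with the running index set to $r+1$, check that $kn(k-n-1)=(k+2(r+1))\,n\,(k+2(r+1)-n-1)$ when $k=-r+\frac{n-1}2$, and subtract. You have in fact been more careful than the paper's one-line proof, which writes $(k+2r)$ where the resonance only holds with $(k+2(r+1))$; your bookkeeping with $a=-r$, $b=r+2$ is the right way to see it. (Minor point: for the weight of $w$ you should cite the commutation of $\hat M_+$ with the Levi --- Proposition 5 of Bump--Choie, already used in the lemma --- rather than Proposition \ref{U(g) version of Conjecture 2}, since this corollary is a step in the proof of that proposition; the weight half is elementary and independent, so no real harm is done, but the citation invites a circularity that isn't there.)

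The last paragraph of your proposal, however, contains a genuine error. The roots of $\fh$ in $\f R$ are the \emph{noncompact} positive roots $d_i^*+d_j^*$ and $2d_i^*$ (those whose root vectors $X_{\alpha(i,j)}$ build $\hat M_+$), so the roots being discarded when one restricts the sum are the \emph{compact} ones $d_i^*-d_j^*$, $i<j$ --- the opposite of what you assert. For such $\alpha$, the operator $Y_\alpha=e_{j,i}-e_{n+i,n+j}$ lies in the Levi, not in $\fu^-$, and corresponds under the identification to the traceless matrix $e_{ji}\in\fsl(n,\C)$; it kills $w$ because $w$ satisfies \eqref{wt(k)} with any weight, since $k\Tr(e_{ji})=0$. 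That is the content of the parenthetical ``since $\hat M_+^{r+1}v$ satisfies (Weight $r+2+\frac{n-1}2$)'' in the statement. Your justification instead appeals to the holomorphicity of $w$, i.e.\ to $w$ being killed by $\f L$ --- but that is precisely the assertion this corollary is used to prove in the completion of the proof of Proposition \ref{U(g) version of Conjecture 2}, so the argument as written is circular as well as misdirected. Replace it with the weight-character observation above and the proof is complete.
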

\begin{proof}
If $r,k,$ and $n$ stand in this relationship to one another then 
$kn(k-n-1)=(k+2r)n(k+2r-n-1)$ and our claim follows by subtracting the two 
equations of the previous corollary.
\end{proof}
\subsection{An isomorphism}  The next step to mimic the argument 
given at the bottom of p. 131 of \cite{Bump-Choie}.  To that end 
define $u=\hat M^{r}_+v_0$ and $w=\hat M^{r+1}_+ v_0,$
where $c$ is the Cayley transform.
Let $\f L = \Span \left\{ \bspm 0&0\\X&0\espm\mid 
X \in \Mat_n\C,\tx{ symmetric}\right\}.$
The goal is to prove that $w$ is annihilated by 
$U(\f L) \subset U(\f g).$ 
  If $w$  is zero, then there is nothing to prove.  Therefore, one 
may assume that $w$ is nonzero, in which case clearly $u$ is also 
nonzero.

Now, recall that $\hat M_+$ is defined as the determinant of a certain matrix with entries in 
the universal enveloping algebra of $\f R$ (which is a commutative ring).
See \cite{Bump-Choie}, p. 123.
Let us now write this out explicitly.  Let $\alpha(i,i)$ be the root such that
$2d_i^*$ and  for $i\neq j$ let $\alpha(i,j)$
be the root $d_i^*+d_j^*.$ 
(In particular, $\alpha(i,j)=\alpha(j,i)$). Then 
$$\hat M_+=\det( c_{\alpha(i,j)} X_{\alpha(i,j)}),$$
where $c_\alpha$ is defined as above.
Now, for each $i,j$ let $\hat M^{(i,j)}_+$ denote the $i,j$ cofactor 
in this determinant, 
(that is, the determinant of the appropriate minor, times the appropriate sign) 
which 
is also an element of the universal enveloping algebra of $\f R$.  

Identify $\fgl(n, \C)$ with $\Ad(c^{-1}) \fk$ via the map $X_0 \mapsto 
\bspm X_0 &0\\ 0 & -\,^\top X_0\espm.$
\begin{prop}
The space
\begin{equation}\label{span cofactors times u}
\Span\left(\{\hat M_+^{(i,j)} u : 1 \le i,j \le n\}\right)
\end{equation}
is nonzero, and it is an irreducible $\frak{gl}(n, \C)$ module.
Assume that 
\begin{equation}\label{span ys times w}
\Span\left(\{Y_\alpha w : \alpha \in \Phi(\fh, \fu^+)\}\right)
\end{equation}
is nonzero.
Then it is an irreducible $\frak{gl}(n, \C)$ module which is 
isomorphic to \eqref{span cofactors times u}.
\end{prop}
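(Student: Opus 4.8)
The plan is to realize each of the two spans as the image of an explicit \emph{irreducible} $\fgl(n,\C)$-module sitting inside $U(\fg)$, to pin down that irreducible by computing $\fgl(n,\C)$-weights, and then to observe that the two modules coincide. Throughout, $\fgl(n,\C)$ (identified with $\Ad(c^{-1})\fk$ as in the statement) acts on $V$, acts on $U(\fg)$ via $X_0\mapsto[X_0,\,\cdot\,]$, and in particular acts on $v_0$ by the weight $k\Tr$ (with $k=-r+\frac{n-1}2$) and on $w$ by the weight $(r+2+\frac{n-1}2)\Tr$ (the Corollary above). Put $\fu^+:=\Span\{X_\alpha:\alpha\in\Phi(\fh,\fu^+)\}$. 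A direct bracket computation shows that $\fgl(n,\C)$ normalizes $\fu^+$ and, once $\fu^+$ is identified with the space of symmetric $n\times n$ matrices, acts on it by $S\mapsto X_0S+SX_0^{\top}$; thus $\fu^+\cong\operatorname{Sym}^2(\C^n)$ and $\f L\cong\operatorname{Sym}^2((\C^n)^*)$ (its contragredient, via the trace form), both irreducible. Since $\fu^+$ is abelian, $U(\fu^+)=S(\fu^+)$ is a polynomial ring, $\hat M_+\in S^n(\fu^+)$ is its basic $SL_n(\C)$-invariant and spans a one-dimensional $\fgl(n,\C)$-submodule of weight $2\Tr$ (a routine computation, consistent with the weights of $v_0$ and $w$), and differentiating the identity $\operatorname{adj}(gSg^{\top})=\det(g)^2\,g^{-1}\operatorname{adj}(S)\,g^{-\top}$ shows that the span $\Span\{\hat M_+^{(i,j)}\}\subset S^{n-1}(\fu^+)$ of the cofactors is a $\fgl(n,\C)$-submodule isomorphic to $\operatorname{Sym}^2((\C^n)^*)$ twisted by $\det^2$, hence irreducible.

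For the first span I would use the evaluation map $\phi\colon S(\fu^+)\otimes\C v_0\to V$, $P\otimes v_0\mapsto Pv_0$, which is $\fgl(n,\C)$-equivariant because $X_0(Pv_0)=[X_0,P]v_0+P(X_0v_0)$ holds in $U(\fg)$. Multiplication by the nonzero element $\hat M_+^r$ is injective on the integral domain $S(\fu^+)$ and shifts the $\fgl(n,\C)$-weight by $2r\Tr$, so $\Span\{\hat M_+^{(i,j)}\hat M_+^r\}$ is again irreducible; tensoring with $\C v_0$ and applying $\phi$ presents the first span $\Span\{\hat M_+^{(i,j)}u\}$ as a homomorphic image of the irreducible module $\operatorname{Sym}^2((\C^n)^*)$ twisted by $\det^{2+2r+k}$. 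This image is nonzero: expanding $\hat M_+=\det(c_{\alpha(i,j)}X_{\alpha(i,j)})$ along its $i$-th row gives $\hat M_+=\sum_j c_{\alpha(i,j)}X_{\alpha(i,j)}\hat M_+^{(i,j)}$, hence $w=\hat M_+u=\sum_j c_{\alpha(i,j)}X_{\alpha(i,j)}(\hat M_+^{(i,j)}u)$, so $w\neq0$ forces some $\hat M_+^{(i,j)}u\neq0$. A nonzero homomorphic image of an irreducible module is isomorphic to it, so the first span is irreducible, of type $\operatorname{Sym}^2((\C^n)^*)$ twisted by $\det^{2+2r+k}$; and since $k=-r+\frac{n-1}2$ the twist is $\det^{r+2+\frac{n-1}2}$.

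For the second span, the analogous map $\psi\colon\f L\otimes\C w\to V$, $Y\otimes w\mapsto Yw$, is $\fgl(n,\C)$-equivariant for the same reason (using that $\f L$ is abelian and $\fgl(n,\C)$-stable and the computed weight of $w$), and it presents $\Span\{Y_\alpha w:\alpha\in\Phi(\fh,\fu^+)\}$ as a homomorphic image of $\f L\otimes\C w\cong\operatorname{Sym}^2((\C^n)^*)$ twisted by $\det^{r+2+\frac{n-1}2}$. Under the standing hypothesis that this span is nonzero, it is therefore irreducible and isomorphic to $\operatorname{Sym}^2((\C^n)^*)$ twisted by $\det^{r+2+\frac{n-1}2}$ --- precisely the module obtained for the first span. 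This yields both assertions.

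The step I expect to demand the most care is not the representation theory but the normalization bookkeeping: checking that $\fgl(n,\C)$ acts on $\fu^+$ by $S\mapsto X_0S+SX_0^{\top}$ with exactly the scalings implicit in the definition of $\hat M_+$ and in the identification of $\fgl(n,\C)$ with $\Ad(c^{-1})\fk$, so that $\hat M_+$ has $\fgl(n,\C)$-weight precisely $2\Tr$ and the cofactor span is precisely $\operatorname{Sym}^2((\C^n)^*)$ twisted by $\det^2$ rather than by some other power of $\det$. The conjugations by the Cayley transform $c$ are where a sign or a factor of two is easiest to lose, and it is exactly the identity $2+2r+k=r+2+\frac{n-1}2$ that forces the two modules to agree.
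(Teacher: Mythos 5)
Your proof is correct and follows essentially the same route as the paper's: both spans are exhibited as nonzero homomorphic images (via the $\fgl(n,\C)$-equivariant evaluation maps $D\mapsto Dv_0$ and $Y\mapsto Yw$, with the weight of the semispherical vector contributing a twist by $l\Tr$) of irreducible $\fgl(n,\C)$-modules of the same isomorphism type, namely the dual symmetric square twisted by $(k+2r+2)\Tr=(r+2+\tfrac{n-1}2)\Tr$. The only cosmetic differences are that you identify the cofactor span via the adjugate transformation law and factor through multiplication by $\hat M_+^r$ in the integral domain $S(\fu^+)$, whereas the paper identifies it via the $\fsl(n,\C)$-invariant pairing with $\f R$ coming from expansion of the determinant by minors and applies the cofactors directly to $u$.
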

\begin{proof}
Since $w=\hat M_+u$ can be expressed as an $\f R$-linear combination 
of elements of \eqref{span cofactors times u}, the fact that \eqref{span cofactors times u}
is nonzero follows from the assumption that $w$ is nonzero. 
First, as $\mathfrak{sl}(n, \C)$-modules,   $\f R$ and $\f L$ are 
both isomorphic to the symmetric square representation, which 
is irreducible and self-dual.  The expansion of the determinant 
(an $\fsl(n, \C)$-invariant) by minors defines an $\fsl(n, \C)$-invariant 
bilinear form between  $\f R$ and the space of minors
\begin{equation}\label{space of minors in uea}
\Span\left(\{\hat M_+^{(i,j)} : 1 \le i,j \le n\}\right)\subset U( \fu^+).
\end{equation}
When regarded as $\fgl(n, \C)$-modules,  $\f L$ 
remains isomorphic to the dual, $\f R^*$, of $\f R,$  
while the $\fsl(n, \C)$-invariant bilinear form 
between \eqref{space of minors in uea} and $\f R$ spans 
a one dimensional $\fgl(n, \C)$-module on which $X_0 \in \fgl(n, \C)$
acts by $2\Tr X_0.$  It follows that \eqref{space of minors in uea}
is isomorphic, as a $\fgl(n, \C)$-module to the twist
of $\f R^*,$ or, equivalently, of $\f L,$ by the one dimensional representation $2\Tr.$ 

Now, if $x$ is any semispherical vector $x \in V$ of weight $l,$ the mapping 
$D \to Dx$ is a $\fsl(n, \C)$-equivariant linear map $U(\g) \to V.$  On each irreducible sub-$\fgl(n, \C)$-module $W$ of $U(\g)$ it is either trivial or injective, 
and when injective the image is a sub-$\fgl(n, \C)$-module of $V$ 
isomorphic to the twist of $W$ by $l \Tr.$
Thus \eqref{space of minors in uea} and \eqref{span ys times w}
are both isomorphic to the twist of $\f L$  by $(k+2r+2)\Tr.$
\end{proof}
\begin{cor}\label{cor: Y's and cofactors}
There is a scalar $C$ such that 
$$ Y_{\alpha(i,j)} w = C \hat M^{(i,j)}_+ u, \qquad (\forall 1 \le i,j \le n).$$ 
\end{cor}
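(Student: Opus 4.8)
The plan is to combine the isomorphism established in the preceding proposition with an explicit matching of highest-weight-type vectors on the two sides. By the proposition, both $\Span(\{\hat M_+^{(i,j)} u\})$ and $\Span(\{Y_{\alpha(i,j)} w\})$ are irreducible $\fgl(n,\C)$-modules isomorphic to the twist of $\f L$ by $(k+2r+2)\Tr$, hence there is some $\fgl(n,\C)$-module isomorphism $\phi$ between them, unique up to scalar by Schur's lemma applied to the irreducible $\fsl(n,\C)$-action. So the content of the corollary is really that the \emph{specific} natural map $\hat M_+^{(i,j)} u \mapsto Y_{\alpha(i,j)} w$ \emph{is} such an isomorphism (up to the scalar $C$), i.e. that it is $\fsl(n,\C)$-equivariant. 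The cleanest route: fix one distinguished index pair, say $(i,j)=(n,n)$, whose weight vector $\hat M_+^{(n,n)}$ (resp. $Y_{2d_n^*} = Y_{\alpha(n,n)}$) is an extreme weight vector in the symmetric-square module $\f L^*$ (resp. $\f L$) — concretely, $\hat M_+^{(n,n)}$ is, up to sign and lower-order terms, the product $\prod_{i<n} X_{\alpha(i,i)}$ of root vectors, which spans the $\fgl(n,\C)$-weight-$(1,\dots,1,-1)\cdot(\text{appropriate twist})$ line. Define $C$ by the single equation $Y_{\alpha(n,n)} w = C \hat M_+^{(n,n)} u$; this is legitimate because both sides lie in one-dimensional weight spaces of isomorphic irreducible modules (or one checks the weight-space on the $Y$-side is nonzero because \eqref{span ys times w} is assumed nonzero and irreducible). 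Then extend to all $(i,j)$ by applying elements of $\fsl(n,\C)$: since the $\fsl(n,\C)$-action is transitive on the set of weight lines of the symmetric square (one reaches $\alpha(i,j)$ from $\alpha(n,n)$ by successively applying root vectors $e_{k,l} \in \fgl(n,\C)$), and since the map $D \mapsto Dx$ is $\fsl(n,\C)$-equivariant for any semispherical $x$ as recorded in the proof of the proposition, the identity $Y_{\alpha(i,j)} w = C \hat M_+^{(i,j)} u$ propagates — provided one has normalized the bases $\{Y_\alpha\}$ and $\{\hat M_+^{(i,j)}\}$ so that $\fsl(n,\C)$ acts on them by \emph{matching} structure constants.

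That last proviso is the real work, and it is where I expect the main obstacle. One must verify that under the identification $\f L \cong \f R^*$ (via the determinant pairing) and $\f R \cong \f L^*$, the basis vectors $Y_{\alpha(i,j)}$ of $\f L$ and the cofactors $\hat M_+^{(i,j)}$ of $\f R^*$ are genuinely dual up to a \emph{uniform} scalar, rather than pair-dependent scalars. Concretely: the determinant expansion $\det(c_{\alpha(i,j)} X_{\alpha(i,j)}) = \sum_i c_{\alpha(i,j)} X_{\alpha(i,j)} \hat M_+^{(i,j)}$ (for each fixed column $j$) exhibits the pairing $\langle X_{\alpha(i,j)}, \hat M_+^{(k,l)}\rangle$, and one needs that the factors $c_{\alpha(i,j)}$ — which are $2$ on the diagonal ($\alpha(i,i)$ long) and $1$ off-diagonal — are exactly absorbed so that the pairing, read against the standard symmetric-square basis of $\f L$, is $\fgl(n,\C)$-invariant with the stated twist by $2\Tr$. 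This is the kind of bookkeeping that the proposition's proof has already organized at the level of abstract modules; here it must be made concrete enough to compare two named bases. I would carry it out by computing $\ad(e_{k,l})$ on a single cofactor $\hat M_+^{(i,j)}$ from the determinant formula (using that $\fu^+$ is abelian, so $\ad(e_{k,l})$ just permutes/rescales the $X_{\alpha(i,j)}$ appearing in the minor) and checking it agrees, up to the overall $2\Tr$-twist, with $\ad(e_{k,l})$ on the corresponding $Y_{\alpha(i,j)} \in \f L$.

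An alternative, and perhaps faster, packaging avoids picking a base point: one observes directly that $D \mapsto Dx$ being $\fsl(n,\C)$-equivariant means the two maps $\f L^* \to V$, namely $\hat M_+^{(i,j)} \mapsto \hat M_+^{(i,j)} u$ and (after the canonical identification of $\f L^*$ with a subspace of $U(\fu^+)$ spanned by the cofactors) $Y_{\alpha(i,j)} \mapsto Y_{\alpha(i,j)} w$, are both $\fsl(n,\C)$-module maps into $V$ with images inside the \emph{same} irreducible submodule (by the proposition). By Schur one image-map is a scalar multiple of the other, and unwinding this scalar against the fixed identifications gives exactly the constant $C$. Either way, once the basis-normalization check is done the corollary is immediate; I would present the base-point argument as the main line since it makes the constant $C$ visibly the same for all $(i,j)$ by construction.
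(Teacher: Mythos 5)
There is a genuine gap, and it comes before the normalization bookkeeping you identify as ``the real work.'' Your argument establishes at most that the two spans \eqref{span ys times w} and \eqref{span cofactors times u} are abstractly isomorphic irreducible $\fgl(n,\C)$-modules and that a suitably normalized isomorphism between them carries $Y_{\alpha(i,j)}w$ to $C\hat M_+^{(i,j)}u$. But the corollary asserts an \emph{equality of vectors in $V$}, not the existence of an isomorphism; these are the same thing only if the two spans are the same subspace of $V$. Nothing in your proposal establishes this. Your base-point step (``define $C$ by $Y_{\alpha(n,n)}w = C\hat M_+^{(n,n)}u$ \dots because both sides lie in one-dimensional weight spaces of isomorphic irreducible modules'') already fails here: if $V$ contained two distinct isomorphic copies of the module (think of $W\oplus W$ with $W$ irreducible), the two extreme weight vectors would be linearly independent and no scalar $C$ would exist. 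Likewise, in your alternative packaging the parenthetical ``with images inside the \emph{same} irreducible submodule (by the proposition)'' asserts exactly what needs to be proved; the proposition only gives isomorphic submodules. The missing ingredient is the one the paper invokes: both spans lie in $U(\fg)v_0 = U(\f R)v_0$, which by Proposition~4 of \cite{Bump-Choie} is \emph{multiplicity-free} as a $\fgl(n,\C)$-module, so two isomorphic irreducible submodules of it necessarily coincide. Only after that does Schur's lemma reduce everything to a single scalar.

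On the secondary point: the basis-matching you propose to do by computing $\ad(e_{k,l})$ on cofactors is handled more cleanly in the paper by comparing the two $\fsl(n,\C)$-invariant bilinear pairings against $\f R$ --- the trace form on $\f R\times\f L$ and the cofactor expansion of the determinant on $\f R\times$\eqref{space of minors in uea}. Since an invariant pairing between $\f R$ and an irreducible module dual to it is unique up to scalar, the two dual-type bases $\{Y_{\alpha(i,j)}\}$ and $\{\hat M_+^{(i,j)}\}$ must correspond under the (now unique, by multiplicity-freeness) identification up to one uniform constant; no structure-constant computation is needed. Your instinct that the $c_\alpha$ factors must be absorbed uniformly is correct, but it is Schur's lemma applied to the pairings, not to individual weight lines, that does it.
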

\begin{proof}
If \eqref{span ys times w} is trivial then the statement holds with $C=0.$
Otherwise, comparing the $\fsl(n, \C)$-invariant bilinear forms on 
$\f R \times \f L$ and $\f R \times$\eqref{space of minors in uea}, 
one sees that the isomorphism \eqref{span ys times w}$\to$\eqref{span cofactors times u} must map $Y_{\alpha(i,j)} w$ to $C \hat M^{(i,j)}_+ u$
for each $i,j.$  Now, both \eqref{span ys times w} and \eqref{span cofactors times u} are contained in $U(\fg) v_0 = U(\f R)v_0,$ and 
it follows from proposition 4, p. 122 of \cite{Bump-Choie} that 
this space is multiplicity-free as a $\fgl(n, \C)$-module.  The result 
follows.
\end{proof}

\subsection{Completion of proof of \ref{U(g) version of Conjecture 2}}
In view of proposition 5 of \cite{Bump-Choie} (p. 124), it suffices to 
prove that the vector $w =\hat M_+^{r+1}$ satisfies \eqref{hol}, i.e., 
is annihilated by $\f L.$  Since 
$\f L$ is irreducible as a $\fsl(n, \C)$-module, and the 
mapping $X \to Xw$ is $\fsl(n, \C)$-equivariant, its kernel 
is either trivial or all of $\f L.$  Hence, it
 suffices to 
prove that $w$ is annihilated by a single element of $\f L.$  We shall
show that $w$ is annihilated by $\sum_{1\le i \le j \le n } Y_{\alpha(i,j)}.$
Suppose not.  Then, the constant $C$ in corollary \ref{cor: Y's and cofactors}
is nonzero.  It follows that 
$$
0 = 
\sum_{\alpha \in \Phi^+} c_\alpha X_\alpha Y_\alpha w = 
\sum_{1\le i\le j\le n} c_{\alpha(i,j)}
X_{\alpha(i,j)}Y_{\alpha(i,j)} w = C\sum_{1\le i\le j\le n} c_{\alpha(i,j)}X_{\alpha(i,j)}\hat M^{(i,j)}_+ u = C\hat M_+u = Cw,
$$
a contradiction.

\section{The other elements of the center}
\label{section: other elements of the center}
It is at first surprising that one may prove proposition \ref{U(g) version of Conjecture 2}, and hence conjecture 2 of \cite{Bump-Choie} using an
argument which references only the Laplace-Beltrami operator, and 
not any of the other elements of the center of $U(\g).$  
However, a hint as to why this is so is already available 
in theorem 3 of \cite{Bump-Choie}.  We briefly review this result.

The center of the universal enveloping algebra of $\f{sp}(4, \C)$ is a 
polynomial ring in two generators, 
one of degree $2$ and the other of degree $4.$  The degree $2$ 
generator is unique up to scalar, and the degree $4$ generator is 
unique up to scalar modulo the span of the square of the degree
$2$ generator.  Explicit choices for the two generators,
$\f D_2$ and $\f D_4,$ are fixed on p. 128 of \cite{Bump-Choie}, 
and it is shown in theorem 3 that 
 $\f D_4\equiv -2\f D_2,$ modulo a right ideal $\cJ$ which 
 annihilates any vector $v$ satisfying \eqref{wt(k)} and \eqref{hol}.

The next theorem generalizes this phenomenon.  
In order to state it, some notation is required.  
Let $\cJ$ denote the right ideal in $U(\fg)$ generated by $\fp.$
Let $\c Z$ denote the center of $U(\fg).$ 
Let $\f b$ denote the Borel subalgebra of $\fg$ consisting 
of all elements $\bspm X_0&0\\X&-\,^\top X_0 \espm \in \f p$ 
such that $X_0$ is lower triangular.  Let $\rho$ denote half the 
sum of the roots of $\f h$ in $\f b.$  
Let $\fh_0$ be the kernel of $\Tr$ in $\f h,$ and fix $H_0\in \fh \smallsetminus \fh_0.$ 

\begin{thm}
Let $\gm$ be the isomorphism $\c Z\to S(\f h)^W$ defined 
on p. 118 of \cite{HC}. 
Let $t: S(\fh)\to S(\fh)$ be the $\C$-algebra isomorphism
which is given on elements of $\fh$ by $H \mapsto H+\rho(H).$ 
let $\pr_1$ denote
projection onto the first factor in the canonical isomorphism 
$S(\fh) \to \C[H_0]\oplus \fh_0 S(\fh).$
Then $$\f D \equiv \pr_1\circ t \circ \gm(\f D) \pmod{\c J}\qquad \forall \f D \in \c Z.$$
\end{thm}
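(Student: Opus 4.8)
The plan is to reduce the statement to the Harish-Chandra isomorphism combined with a careful bookkeeping of what happens modulo the right ideal $\cJ$ generated by $\fp$. First I would observe that, because $\cJ$ annihilates any vector $v_0$ satisfying \eqref{hol} and \eqref{wt(k)}, and because the scalar generalized Verma module $M(\lambda) := U(\fg)\otimes_{U(\fp)}\C e_\lambda$ is freely generated over $U(\fu^-)$ by such a vector, the quotient $U(\fg)/\cJ$ is isomorphic as a left $U(\fg)$-module (indeed as a $U(\fh)$-bimodule in the relevant sense) to $U(\fu^-)$, via the PBW decomposition $U(\fg) = U(\fu^-)\otimes U(\fr)$ followed by projecting $U(\fr)$ onto its ``scalar part.'' Concretely, the composite $\cZ \hookrightarrow U(\fg) \twoheadrightarrow U(\fg)/\cJ$ should be computed by writing a central element in the PBW basis ordered as (things in $\fu^-$)(things in $\fh$)(things in $\fu^+$), discarding every term with a nontrivial $\fu^+$-factor, and replacing each surviving $\fh$-monomial by the scalar by which $\fr$ acts on $v_0$.

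Next I would make the identification of that ``scalar by which $\fh$ acts'' precise: a vector satisfying \eqref{wt(k)} of weight $k$ has $\fh$ acting through the character $k\Tr$, equivalently through the point of $\fh^*$ that is $k$ times the functional $\Tr$; under the canonical splitting $S(\fh)\cong \C[H_0]\oplus \fh_0 S(\fh)$ this is exactly the operation $\pr_1$ (kill the $\fh_0$-part, keep the $\C[H_0]$-part), because $\fh_0 = \ker(\Tr)$ acts by zero. So the content of the theorem is that, for a central element $\f D$, reduction modulo $\cJ$ equals $\pr_1$ applied to the ``$U(\fh)$-part'' of $\f D$ in the above PBW ordering. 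The classical fact (this is essentially the construction of the Harish-Chandra homomorphism, see \cite{HC}) is that the projection of $\cZ$ onto the $U(\fh)$-part relative to the decomposition $U(\fg) = U(\fh)\oplus(\fu^- U(\fg) + U(\fg)\fu^+)$ — here one must be a little careful that for central elements the two possible one-sided projections agree — is an algebra homomorphism $\cZ\to S(\fh)$, and that composing it with the shift automorphism $t\colon H\mapsto H+\rho(H)$ gives the Harish-Chandra isomorphism $\gm$ onto $S(\fh)^W$. Thus the $U(\fh)$-part of $\f D$ is precisely $t^{-1}(\gm(\f D))$, and applying $\pr_1$ to it gives $\pr_1(t^{-1}\gm(\f D))$.

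The remaining, and I expect most delicate, point is the appearance of $t$ rather than $t^{-1}$ in the statement, together with the choice of $\rho$ as half the sum of roots in $\fb$ (a Borel contained in $\fp$, not an arbitrary one). Here I would argue as follows. The Harish-Chandra projection $\cZ\to S(\fh)$ ``along $\fu^-,\fu^+$'' is only canonical up to which nilradical one uses; using the opposite nilradical replaces the projection by its composition with the $W$-action, but on $S(\fh)^W$ this is invisible, so the raw projection of $\f D$ lands on $t^{-1}\gm(\f D)$ regardless. What is \emph{not} invisible is that $\pr_1$ does not commute with $W$, so the ordering convention in $\cJ = U(\fg)\fp$ (which singles out $\fu^+\subset\fp$ and hence a specific choice of positive system) must match the $\rho$ used in $t$. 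I would verify the sign/shift by a direct computation on the Laplace–Beltrami operator $\Delta$: using the explicit formula $\Delta = \Delta^{(1)} + 2\sum_\alpha c_\alpha X_\alpha Y_\alpha$ derived earlier, the second sum lies in $\cJ$ (it ends in a $Y_\alpha\in\fu^+$... more precisely one rewrites it so), while $\Delta^{(1)} = \sum d_i^2 - \sum_\alpha c_\alpha[X_\alpha,Y_\alpha]$ reduces modulo $\cJ$ to $\sum d_i^2 - \sum_\alpha c_\alpha h_\alpha$ acting as the scalar $k n(k-n-1) = \pr_1\bigl(t(\text{the degree-2 generator})\bigr)$ on a weight-$k$ vector, pinning down both the factor and the direction of the shift by $\rho = \tfrac{1}{2}\sum_{\alpha>0}\alpha$. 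Once the normalization is checked on one generator (and on $1$), the fact that all three maps $\f D\mapsto (\f D\bmod\cJ)$, $\f D\mapsto\pr_1 t\gm(\f D)$ are algebra homomorphisms out of $\cZ$ — the first because $U(\fg)/\cJ$ is a quotient only of a module, so I must instead argue that the map $\cZ\to\C$, $\f D\mapsto$ (scalar by which $\f D$ acts on $v_0$) factors through $\gm$ by the standard infinitesimal-character argument for $M(\lambda)$ — lets me conclude in general. The main obstacle, then, is purely one of conventions: matching Bump–Choie's normalization of $M_+$, $\Delta$, the Cayley transform, and the ``$\rho$'' of \cite{Bump-Choie} against the Harish-Chandra $\rho$ and $\gm$ of \cite{HC}, so that the shift comes out as $H\mapsto H+\rho(H)$ and not $H\mapsto H-\rho(H)$ or some other variant.
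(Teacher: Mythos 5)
Your overall strategy is the same as the paper's: the paper's proof is essentially one line, namely that Harish-Chandra \emph{constructs} $\gm$ as $t^{-1}\circ\gm'$, where $\gm'(\f D)$ is the unique element of $U(\fh)$ with $\f D-\gm'(\f D)$ in the ideal $\c I$ generated by the nilradical of the Borel $\f b\subset\fp$; since $\c I\subset\c J$ and $\fh_0S(\fh)\subset\c J$, one gets $\f D\equiv\gm'(\f D)=t\circ\gm(\f D)\equiv\pr_1\circ t\circ\gm(\f D)$ at once. You correctly identify both ingredients (the raw Harish-Chandra projection, and the fact that $\pr_1$ only changes things by elements of $\fh_0S(\fh)\subset\c J$). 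However, your main derivation contains a genuine error: you assert that $\gm$ is the raw projection \emph{followed by} $t$, hence that the raw projection of $\f D$ is $t^{-1}\gm(\f D)$, which is backwards. In the convention of \cite{HC} (and the standard one: the scalar by which $\f D$ acts on a highest-weight module of weight $\lambda$ is $\gm(\f D)(\lambda+\rho)$), one has $\gm'=t\circ\gm$ with $t\colon H\mapsto H+\rho(H)$, which is exactly what the statement requires. Your attempted reconciliation --- that switching to the opposite nilradical only composes the raw projection with a Weyl-group element and is therefore ``invisible on $S(\fh)^W$'' --- is false: the two one-sided projections send $\f D$ to $t(\gm(\f D))$ and $t^{-1}(\gm(\f D))$ respectively (already for $\mathfrak{sl}_2$ the Casimir projects to $\tfrac12 h^2+h$ on one side and $\tfrac12 h^2-h$ on the other), and only one of them is congruent to $\f D$ modulo $\c J$, namely the one taken along the nilradical that lies \emph{inside} $\fp$. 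So the shift direction is not a matter of convention to be checked later; it is forced by the requirement $\c I\subset\c J$. Your fallback plan of pinning down the normalization by computing with $\Delta$ and $\Delta^{(1)}$ would indeed detect and correct the sign, so the argument is repairable, but as written the central identification is wrong.

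Two smaller points. First, the decomposition $U(\fg)=U(\fh)\oplus(\fu^-U(\fg)+U(\fg)\fu^+)$ you invoke is not valid, because $\fu^\pm$ are only the nilradicals of the parabolic and its opposite; the root spaces of $\g_S$ lie in neither summand. The correct statement uses the full triangular decomposition attached to the Borel $\f b\subset\fp$, which is why the paper takes $\c I$ to be generated by the nilradical of $\f b$ rather than by $\fu^+$; the $\fh_0$-part (and the compact root vectors of $\g_S$) are then absorbed into $\c J$ separately, which is exactly what makes $\pr_1$ harmless. Second, much of your bookkeeping is phrased in terms of the scalar by which $\fr$ acts on a vector $v_0$; the theorem as stated is a congruence in $U(\fg)$ modulo $\c J$ and makes no reference to a module, so the argument should be carried out at the level of $U(\fg)$ (the module-theoretic version is the intended application, but it is a consequence, not the statement).
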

\begin{proof}
This follows from the construction of $\gm$ given on p. 118 of \cite{HC}.
Let $\c I$ denote the ideal in $U(\g)$ generated by the nilpotent 
radical of $\f b.$  
Then $\gm = t^{-1}\circ \gm'$ where $\gm'(\f D)$ is uniquely determined 
by the fact that $\gm'( \f D) \in U( \f h)$ and $\f D - \gm'(\f D) \in \c I$
for all $\f D \in U(\f g).$   Thus $\f D\equiv \gm'( \f D) = t\circ \gm(\f D)
\pmod{\c J}.$  Clearly, $\c J$ contains both $\c I$ and $\fh_0 S(\fh).$
\end{proof}
Thus, the action of $\c Z$ on $V$ factors through the projection 
to a quotient ring which is isomorphic to a 
polynomial algebra in one generator, and generated by the image of the Laplace-Beltrami operator.  

\section{Notation for the Jacobi case}
\label{section: jacobi case notation}
First, let us set up some notation.  The treatment of Jacobi forms in 
this paper is influenced by 
\cite{Berndt-Schmidt} as well as \cite{Bump-Choie} and \cite{Choie-Kim}.  Fix integers $n$ and $j$ for the remainder 
of the paper.  

For a ring $R$, let  $\sym^2_j(R)$ for the space of symmetric $j\times j$ matrices
with entries in  $R.$ 

\subsection{Jacobi group}
Define the Jacobi group $G^{(n,j)}$ to be the following subgroup
of $Sp(2n+2j)$:
$$
\left\{
\bpm I_n &0&0& \,^\top\!u\\ v&I_j&u& \zeta \\ 0&0&I_n& -\,^\top\!v\\
0&0&0&I_j\epm
\bpm A& 0&B&0 \\ 0&I_j&0&0\\C&0&D&0 \\ 0&0&0&I_j \epm:
u, v \in \Mat_{j\times n }, 
\zeta + u{}^\top v\in \sym^2_j,
\bpm A&B\\ C&D \epm \in Sp(2n)
\right\}.
$$
It is convenient to identify
$$
\bpm A&B\\ C&D \epm\in Sp(2n) \text{ with } \bpm A& 0&B&0 \\ 0&I_j&0&0\\C&0&D&0 \\ 0&0&0&I_j \epm \in G^{(n,j)}, 
\text{ and }
(v, u, \zeta) \text{ with }  
\bpm I_n &0&0& \,^\top\!u\\ v&I_j&u& \zeta \\ 0&0&I_n& -\,^\top\!v\\
0&0&0&I_j\epm.
$$ 
Doing so equips $H^{(n,j)}:\{(v, u, \zeta) \in \Mat_{j\times n} \times \Mat_{j\times n} \times \Mat_{j\times j}\;: \; \zeta + u{}^\top\!v\in \sym^2_j\}$
with the structure of a Heisenberg group, with operation
$$
(v_1, u_1, w_1) \cdot (v_2, u_2, w_2) = (v_1+v_2, u_1+u_2, 
w_1+w_2 + v_1{}^\top u_2 + u_1{}^\top v_2).
$$
\subsection{Symmetric space $\c H^{(n,j)}$}
Let $\c H_n$ denote the Siegel upper half-plane of genus $n,$ 
i.e., the space of all $\tau=X+iY \in \sym^2_n(\C)$ with $X, Y \in \sym^2_n(\R)$ 
and $Y$ positive definite.  The group $G^{(n,j)}(\R)$ 
acts on $\c H^{(n,j)}:=\c H^n\times \Mat_{j\times n}(\C)$ by the formulae
$$
\bpm A&B \\ C& D\epm \cdot ( \tau, z) =\left( (A \tau + B)(C\tau+D)^{-1}, z(C\tau+D)^{-1}\right), 
\qquad 
 (v,u, \zeta) \cdot ( \tau , z) = (\tau, z+u+\tau v).
$$
The stabilizer of $(iI_n, 0)$ is $U(n) \times Z(\R),$ where $Z=\{(0,0,\zeta): \zeta \in \sym^2_j\}$ is the 
center of $G^{(n,j)}.$

Given $\c M \in \sym^2_j(\C)$ define 
$e^{\c M}: \Mat_{j\times j}(\C) \to \C$ by 
$e^{\c M}(A) = e^{2\pi i \Tr(\c M A)}.$  Observe that $e^{\c M}( A) = e^{\c M}({}^\top A)$ for all $A \in  \Mat_{j\times j}(\C).$

\subsection{Covering groups}
Let $\Sp(2n, \R)$ be the metaplectic double 
cover of $Sp(2n, \R).$  
Choose a branch of the square root, and define 
$$
J_{\frac12}\left( \bpm A&B\\C&D \epm , \tau \right):= \det ( C\tau +D)^{-\frac12},
\qquad \sigma( g_1, g_2) = \frac{J_{\frac12}( g_1g_2, \tau)}{J_{\frac12}(g_1, g_2\tau)J_{\frac12}(g_2, \tau)}.
$$
Then one may identify 
$
\Sp(2n, \R)$ 
with the set $Sp(2n, \R)\times \{\pm 1\},$
equipped
 with multiplication 
$(g_1, \ve_1) (g_2, \ve_2) = (g_1g_2, \sg(g_1, g_2)\ve_1\ve_2),$
as in \cite{Bump-Choie}.
However, the topology on $\Sp(2n, \R)$ is such that
multiplication is continuous, and hence 
disagrees with that of $Sp(2n, \R)\times \{\pm 1\}.$

Define an action of $\Sp(2n, \R)$ on $H^{(n,j)}_\R$ by composing
the usual action of $Sp(2n,\R)$ with the projection $\Sp(2n,\R) \to Sp(2n, \R),$ and let $\wt G^{(n,j)}_\R$ be the semidirect product
$\Sp(2n, \R) \rtimes H^{(n,j)}_\R,$ defined using this action.  The function
$$
J_{\frac 12}\left(\left(\bpm A&B\\ C&D \epm, \ve\right), 
 \; \tau\right) = J_{\frac12}\left( \bpm A&B\\C&D \epm , \tau \right)\ve
$$
satisfies the cocycle condition 
$$
J_{\frac12}(g_1g_2, \tau) 
= J_{\frac12}(g_1, g_2\tau)J_{\frac12}(g_2, \tau),
\qquad (\forall g_1, g_2 \in \Sp(2n, \R), 
\; \tau \in \c H_{n}).
$$

\subsection{Cocycle and slash operator}

Fix $k \in \Z$ and $\c M \in \sym^2_j\frac12\Z.$ For $\left(\bspm A&B\\ C&D \espm, \ve\right) \in \Sp(2n, \R),$ 
$(v,u,\zeta) \in H^{(n,j)}_\R,$ and $(\tau , z) \in \c H^{(n,j)},$ define 
 $$\begin{aligned}
&j_{\frac k2, \c M}\left(g (v,u,\zeta), (\tau, z)\right)
\\&\qquad=J_{\frac 12}\left(g, 
 \; \tau\right)^k
e^{\c M}( \zeta + (v \tau + 2 z + u){}^\top v - (z+u+v\tau)(C\tau+D)^{-1}C (z+u+v\tau)),
\end{aligned}
$$
Then $j_{\frac k2, \c M}$ satisfies the cocycle condition
$$
j_{\frac k2, \c M}( g_1g_2 , \b x) = j_{\frac k2, \c M}(g_1, g_2\b x) j_{\frac k2, \c M}( g_2, \b x) 
\qquad ( \forall g_1, g_2 \in \wt G^{(n,j)}_\R, \b x \in \c H^n \times \Mat_{j\times n}(\C)).
$$
Now, for $f: \c H^{(n, j)} \to \C$ and $g \in \JC_\R$ define 
$f|_{\frac k2, \c M}g: \c H^{(n,j)} \to \C$ by 
$$\left(f\Big|_{\frac k2, \c M}g \right)(\b x) := j_{\frac k2, \c M}( g, \b x) 
f( g \b x), \qquad 
\left(\b x \in \c H^{(n,j)}\right).$$

\subsection{Meromorphic Jacobi forms}
Let $j^0_{\frac k2, \c M}: \JC_\R \times  \c H^{(n,j)} \to \C$ be defined by the 
same formula as $j_{\frac k2, \c M}$ with the factor $\ve ^k$ omitted.
To define Jacobi forms, one chooses a discrete subgroup $\Gm \subset \Ja_\R,$ and a function $\chi: \Gm \to \C$ such that 
$(\gm, \b x) \mapsto \chi(\gm) j_{\frac k2,\c M}^0(\gm, \b x)$ is a cocycle.
In this case $\wt\chi((\gm, \ve)):= \chi( \gm) \ve$ is a character
of the preimage $\wt \Gm$ of $\Gm$ in $\JC_\R.$
Then a meromorphic Jacobi form of weight $\frac k2,$ index $\c M,$ and multiplier
$\chi$  is a meromorphic function 
$f: \c H^{(n,j)} \to \C$
which satisfies 
\begin{equation}\label{modularity property of Jacobi forms}
j_{\frac k2, \c M}^0(\gm, \b x) \chi(\gm) f( \gm \b x) = f(\b x) \qquad \left(\forall
\gm \in \Gm, \; \b x \in \c H^{(n,j)} \right).
\end{equation}
One might equivalently describe it as being invariant under the 
right-action of $\wt\Gm$ 
on functions $\c H^{(n,j)} \to \C$ 
defined by 
$$\left(f\Big|_{\frac k2, \c M,\wt \chi}\gm \right)(\b x) := j_{\frac k2, \c M}( \gm, \b x) 
\wt\chi( \gm ) f( \gm \b x), \qquad 
\left( \gm \in \wt \Gm, \; \b x \in \c H^{(n,j)}\right).$$

\section{Main Result, a la Choie-Kim}
\label{section: Main result a la Choie-Kim}

\begin{defn}
Take $\c M \in \sym^2_j (\R),$ positive definite.  Define 
$\ptl{}\tau$ to be the $n \times n$ matrix with $r,s$ entry $(1+\delta_{r,s})\ptl{}{\tau_{rs}},$ and $\ptl{}z$ the $j \times n$ matrix with $r,s$ entry 
$\ptl{}{z_{rs}}.$  Finally, write $|\c M|$ for the determinant of $\c M$ 
and $\wt{\c M}$ for the classical adjoint of $\c M,$ i.e., the matrix of cofactors satisfying $\c M^{-1} = |\c M|^{-1} \wt{\c M}.$ Then 
$$
L_{\c M}:= \det \left( 4 \pi i |\c M| \ptl{}\tau -{\ptl{}z}^{\hskip -14pt t \hskip 14pt} \wt{\c M} \ptl{}z\right).$$ 
\end{defn}

\begin{thm}\label{Main thm: jacobi}
Let $f$ be a meromorphic Jacobi form of weight $-r 
+ \frac{n+j+1}2$
and index $\c M.$ 
  Then 
$L_{\c M}^k \cdot f$ is a meromorphic 
Jacobi form of weight $r 
+ \frac{n+j+1}2$ and index $\c M.$ 
\end{thm}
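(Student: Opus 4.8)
The strategy is to reduce Theorem \ref{Main thm: jacobi} to the Siegel case (Proposition \ref{U(g) version of Conjecture 2}, or rather its classical incarnation) by the ``extension to a large Siegel upper half-space'' technique mentioned in the introduction. Concretely, a Jacobi form $f$ on $\c H^{(n,j)} = \c H_n \times \Mat_{j\times n}(\C)$ of index $\c M$ should be encoded as a (vector-valued, or theta-decomposed) function $\tilde f$ on the Siegel upper half-space $\c H_{n+j}$ attached to $Sp(2(n+j))$, via the embedding $\c H^{(n,j)} \hookrightarrow \c H_{n+j}$ sending $(\tau,z)$ to the block matrix $\left(\begin{smallmatrix}\tau & {}^\top\! z\\ z & \c M'\end{smallmatrix}\right)$ for a suitable choice of $\c M'$ depending on $\c M$ (so that the imaginary part is positive definite). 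Under this embedding, the Jacobi group $G^{(n,j)}$ sits inside $Sp(2(n+j))$ as indicated in Section \ref{section: jacobi case notation}, and the Jacobi cocycle $j_{\frac k2,\c M}^0$ should be matched with the restriction of the Siegel automorphy factor $\det(C\Omega+D)^{\text{weight}}$ along this slice — this is the point where the shift $\frac{n+j+1}{2}$ (rather than $\frac{n-1}{2}$) enters: the genus of the ambient Siegel space is $n+j$, so the Bol/Choie--Kim exponent for ``recovery at $r+1$'' becomes $-r+\frac{(n+j)-1}{2}$, matching the hypothesis.

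First I would make the embedding precise and record the pullback formula: if $F$ is a function on $\c H_{n+j}$ and $\tilde f$ its restriction along the slice, then the Siegel slash operator for $Sp(2(n+j))$ restricted to $G^{(n,j)} \subset Sp(2(n+j))$ should agree, up to the fixed exponential factor coming from the $\c M'$-block, with the Jacobi slash operator $|_{\frac k2,\c M}$. Second I would verify that the Siegel determinant differential operator of Choie--Kim on $\c H_{n+j}$ — i.e. the operator whose $(r+1)^{\text{st}}$ iterate realizes recovery in genus $n+j$ — restricts along the slice to exactly $L_{\c M}$ (up to a nonzero constant and an exponential conjugation). This is the analogue of Lemma \ref{M is basically D} in the Jacobi setting, and requires computing how $\det\bigl(\partial/\partial\Omega\bigr)$ on $\c H_{n+j}$ decomposes under the block decomposition $\Omega = \left(\begin{smallmatrix}\tau & {}^\top\! z\\ z & w\end{smallmatrix}\right)$ when restricted to $w$ constant $=\c M'$; the cross-terms between the $\tau$-derivatives and the $z$-derivatives should produce precisely the ${}^\top\!(\partial/\partial z)\,\wt{\c M}\,(\partial/\partial z)$ term, and the chain of Harris--Maass type identities converts $\partial/\partial\Omega$ into the relevant Maass raising operator. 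Third, granting these two compatibilities, one applies the Siegel result: $f$ meromorphic Jacobi of weight $-r+\frac{n+j+1}{2}$ pulls back (after multiplying by the exponential) to a meromorphic Siegel-type object of weight $-r + \frac{(n+j)-1}{2}$ on the relevant arithmetic quotient; its $(r+1)^{\text{st}}$ determinant-derivative is meromorphic modular of weight $r+2+\frac{(n+j)-1}{2} = r + \frac{n+j+1}{2} + 1$ — and one tracks the bookkeeping so that $L_{\c M}^{r+1}$ (note: the theorem as stated writes $L_{\c M}^k$, which I read as $k = r+1$, consistent with ``recovery at $r+1$'') acting on $f$ itself lands in weight $r+\frac{n+j+1}{2}$ after dividing the exponential factor back out; finally restrict back along the slice and check the transformation law \eqref{modularity property of Jacobi forms} for the index-$\c M$ cocycle directly.

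The main obstacle I anticipate is the second step: pinning down the precise relationship between $L_{\c M}$ and the restriction of the genus-$(n+j)$ determinant differential operator. The operator $L_{\c M}$ has the heat-operator combination $4\pi i|\c M|\,\partial/\partial\tau - {}^\top\!(\partial/\partial z)\,\wt{\c M}\,\partial/\partial z$ inside a determinant, and matching this against the off-diagonal structure of $\det(\partial/\partial\Omega)$ on the larger Siegel space — while simultaneously conjugating by the exponential $e^{\c M}$-type factor and invoking Harris--Maass to pass between the naive determinant operator and the Maass raising operator $M_+$ — involves a genuinely nontrivial computation with the symplectic structure in the $z$-$w$ block. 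This is exactly the ``Jacobi version of Lemma \ref{M is basically D}'' that the introduction flags as missing for a second proof via Theorem \ref{Jacobi U(g) version of Conjecture 2}; here we need only the restriction-compatibility at the level of the operators themselves, not the representation-theoretic dictionary, so it should be tractable, but it is where the real work lies. A secondary subtlety is ensuring the chosen $\c M'$ (and the attendant positivity of the imaginary part along the slice) is consistent with $\c M$ being merely real positive definite rather than half-integral; since we are working with meromorphic forms and a single form $f$ at a time, one can fix $\Gamma$, $\chi$ abstractly and need not worry about a full theta decomposition, which keeps this manageable.
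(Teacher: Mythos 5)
Your overall strategy---pass to genus $n+j$, match the Jacobi cocycle against the Siegel automorphy factor, relate $L_{\c M}$ to the genus-$(n+j)$ determinant operator via the block structure, and quote the Siegel recovery result together with a Harris--Maass comparison---is the paper's strategy. But the central mechanism, the one you correctly flag as ``where the real work lies,'' is set up in a way that would fail. You propose to realize $f$ on the \emph{slice} $\bspm\tau&{}^\top z\\ z&\c M'\espm$ with the lower-right block frozen at a constant $\c M'$, and then to ``restrict'' $\det(\partial/\partial\Omega)$ to that slice. This cannot work as stated: the determinant operator involves derivatives transverse to the slice, so its restriction to functions on the slice is not defined; the Jacobi group does not preserve the slice (both the Heisenberg part and $Sp(2n)$ move the lower-right block), so neither is the restricted slash action; and a frozen $\c M'$ enters only as a constant coordinate value, so the cofactor matrix $\wt{\c M}$ inside $L_{\c M}$ has no way to appear. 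The missing idea is to \emph{extend} rather than restrict: the paper sets $\on{ext}_{\c M}f\bspm\tau&{}^\top z\\ z&\tau'\espm=f(\tau,z)\,e^{2\pi i\Tr(\c M\tau')}$ with $\tau'$ a free variable in $\c H_j$. Then $\partial/\partial\tau'$ acts as multiplication by $2\pi i\c M$, the block-determinant identity (already proved on p.~86 of Choie--Kim) gives $\D^l\,\on{ext}_{\c M}f=[(4\pi i)^{n-j}(\det\c M)^{n-1}]^l\,\on{ext}_{\c M}L_{\c M}^l f$ exactly, your worry about choosing $\c M'$ with positive-definite imaginary part evaporates, and the cocycle matching becomes the clean identity $(\on{ext}_{\c M}f)|_k g=\on{ext}_{\c M}(f|_{k,\c M}g)$ for $g$ in the Jacobi group, which is then propagated through $M_+^r$ on the group because left translation commutes with the right $U(\fg)$-action. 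With the extension in hand, the operator comparison you anticipate as a hard computation reduces to Lemma \ref{M is basically D} in genus $n+j$ plus the quoted determinant identity.

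Separately, your bookkeeping is off by one. The hypothesis is weight $-r+\frac{n+j+1}{2}=-(r-1)+\frac{(n+j)-1}{2}$, not $-r+\frac{(n+j)-1}{2}$ as you assert when matching against the Siegel recovery condition; so recovery in genus $n+j$ occurs at the $r$-th iterate, the exponent on $L_{\c M}$ should be read as $r$ (the ``$k$'' in the statement), and the output weight is $r+\frac{n+j+1}{2}$ as claimed. With your reading of $r+1$ iterates the output weight would be $r+2+\frac{n+j+1}{2}$, which contradicts the statement you are proving---your own weight computation came out one unit high and should have signalled the discrepancy.
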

\begin{proof}  Theorem \ref{Main thm: jacobi} can 
actually be deduced from proposition \ref{U(g) version of Conjecture 2} and the results of \cite{Bump-Choie}.
First, define the {\bf extension by $\pmb{e^\c M}$} of $f$
$$\on{ext}_{\c M}f
\bpm \tau & ^\top z\\ z&\tau' \epm 
= f(\tau, z) e^\c M(\tau'),
\qquad ( \tau \in \c H_n, z \in \Mat_{j\times n}\C, 
\tau' \in \c H_j).
$$
Clearly, meromorphicity of $f$ implies that
of $\on{ext}_{\c M} f.$  Further, it follows from a determinant identity given on 
p. 86 of \cite{Choie-Kim} that 
$\D^l \on{ext}_\c M f = [(4\pi i)^{n-j} \det \c M^{n-1}]^l
\on{ext}_{\c M} L_{\c M}^l f$
for all $l \in \N.$
Next, define 
$$\wt \sigma_k: C^\infty(\c H_{n+j}) 
\to C^\infty (\wt Sp(2n+2j, \R)).$$
By 
$$\begin{aligned}
\wt \sigma_k f\left(g
\right)
&=
\left[f\big|_k g\right]\left(iI_{n+j}\right)=
J_{\frac12}(g, iI_{n+j}) f\left(g\cdot iI_{n+j}\right).
\end{aligned}
$$
Then for any $f_1 \in C^\infty( \c H_{n+j}),$ 
$$
\wt \sg_k f_1 = \sg_k F_1, \qquad \text{ where }
F_1(\tau) = \det Y^{k/2} f(\tau),
$$
and $\sg_k$ is defined as in \cite{Bump-Choie}.
The image of $\sg_k$ is the space
$C^\infty( \Sp(2n+2j, \R))_{(\wt U(n+j), \det^k)}$ 
consisting of all $\phi \in C^\infty( \Sp(2n+2j, \R))$
which satisfy
$$
\phi\left( g \kappa \right) 
= \phi(g) J_{\frac12}( \kappa , iI_{n+j})^{2k} \qquad \left(\forall g \in \Sp(2n+2j , \R), \kappa \in \wt U(n+j)\right).
$$
If $k \in \Z$ this takes the more convenient form 
$$
\phi\left( g \bpm A&-B\\ B& A\epm \right)
= \phi(g) \det(A+Bi)^k 
\qquad( \forall g \in Sp(2n+2j, \R), \; A+Bi \in U(n+j)).
$$
As shown in \cite{Bump-Choie} the action of
$M_+ \in U( \frak{sp}(2n+2j, \C))$ 
on $C^\infty( \Sp(2n+2j, \R))$ 
maps $C^\infty( \Sp(2n+2j, \R))_{(\wt U(n+j), \det^k)}$
to $C^\infty( \Sp(2n+2j, \R))_{(\wt U(n+j), \det^{k+2})}$
for each $k \in \frac 12\Z.$
This
induces 
operators $$\M_k:= \wt \sg_{k+2}^{-1} \circ 
M_+ \circ \wt \sg_k: C^\infty( \c H _{n+j})
\to C^\infty( \c H _{n+j}),\qquad 
\left(k \in \frac 12\Z\right).$$
One may also define 
$\M_k^l:= \wt \sg_{k+2l}^{-1} \circ 
M_+^l \circ \wt \sg_k
= \M_{k+2l-2} \circ \dots \circ \M_{k+2}\circ \M_k$
for each $k \in \frac 12 \Z, \; l \in \N.$

 \begin{lem}\label{M is basically D}
  Suppose 
 that $f\in C^\infty(\c H_{n+j})$ and 
 $\M_k^lf$
 are both meromorphic.  
 Then 
 $$
 \M_k^l f = 2^{ln} \D^l f.
 $$
 \end{lem} 
 \begin{proof}
We expand slightly on the proof given on p. 126 of 
\cite{Bump-Choie}.  Write $\tau \in \c H_n$
as $X+iY$ and say that a function is ``simple nonmeromorphic'' if it is equal to a meromorphic 
function times a rational function of $Y$ with the 
property that the degree of the denominator exceeds
that of the numerator.  
Note that the zero function is both meromorphic 
and simple nonmeromorphic, and is the only function 
with both properties.

It follows from results of Harris \cite{Harris} that the 
operator 
$\M_k$ can be realized explicitly as 
 multiplication by $(\det Y)^{-\frac{n+j-1}2+k}$
 followed by $2^n \D,$ and then multiplication 
 by $(\det Y)^{\frac{n+j-1}{2}-k}.$
 This implies that $\M_k f - 2^n \D f$ is a simple nonmeromorphic function for any meromorphic 
 function $f,$ and that $\M_k$ applied to any 
 simple nonmeromorphic function is again
 simple nonmeromorphic.  Thus, for $f$ meromorphic
 $\M_k^lf - 2^{nl}\D^l f$  is simple nonmeromorphic.  If 
 $\M_k^lf$ is meromorphic, then $\M_k^lf - 2^{nl}\D^l f$
 is also meromorphic, and therefore zero.
 \end{proof}

Let 
$\Mer( \c H_{n+j})$ denote the space of 
meromorphic functions $\c H_{n+j} \to \C$ and let 
$$\Mer(\Sp(2n+2j, \R))_{(\wt U(n+j), \det^k)}:= \wt \sg_k \left[\Mer( \c H_{n+j})\right]\subset C^\infty (\Sp(2n+2j, \R))_{(\wt U(n+j), \det^k)}
.$$  Now,
as explained in \cite{Bump-Choie}, pp. 124-27, it follows from proposition \ref{U(g) version of Conjecture 2} that 
$\M_{-r+\frac{n+j+1}2}^r$ maps 
$\Mer(\c H_{n+j})$ to $\Mer(\c H_{n+j}),$ and therefore
coincides with a scalar multiple of $\D^r$ on $\Mer(\c H_{n+j}).$

Now, 
let $\Lambda$ denote the right-action of 
$\Sp(n, \R)$ on $C^\infty(\Sp(n, \R)$ 
given by 
$[\Lambda(g)f](h) = f(gh).$ 
A straightforward calculation shows that
$$
\left(\on{ext}_\c M f_1\Big |_k g\right)
= \on{ext}_\c M \left( f_1\big|_{k, \c M} g\right),
\qquad 
\left(\forall g \in \JC_\R, f_1 \in C^\infty( \c H^{(n,j)})\right).
$$
So
$$
f \big|_{k, \c M} \gm = \wt \chi(\gm) f 
\iff 
(\on{ext}_\c M f)\big|_k \gm = \wt \chi( \gm) f
\iff \Lambda(\gm) \wt \sg_k(\on{ext}_\c M f) = \wt\chi(\gm ) \wt \sg_k(\on{ext}_\c M f).
$$
Now it's clear that the action of $\Sp(2n+2j, \R)$ on 
$C^\infty( \Sp(2n+2j, \R))$ via $\Lambda$
commutes with the action of $U( \f{sp}(2n+2j, \C))$
on the right.  Hence
$$
\Lambda(\gm) M_+^r \wt \sg_k(\on{ext}_\c M f) = \wt\chi(\gm ) M_+^r \wt \sg_k(\on{ext}_\c M f),
$$
and so
$$\left[(\M_k^r \on{ext}_\c M f)\Big|_{k+2r} \gm \right]
=\wt\chi(\gm) (\M_k^r \on{ext}_\c M f).$$
If $k=-r+\frac{n+j+1}2,$ then 
proposition \ref{U(g) version of Conjecture 2}
implies that $(\M_k^r \on{ext}_\c M f)$ is meromorphic,
and therefore 
one may replace $\M_k^r$
be $2^{nr}\D^r.$ Then the result follows from 
the relationship between $\D, \on{ext}_\c M$ 
and $L_{\c M}$ already mentioned.
\end{proof}
\section{Alternate approach, a la Bump-Choie}
\label{section:  Alternate approach, a la Bump-Choie}
In the last section we showed that theorem 
\ref{Main thm: jacobi}, which is essentially a version
of Bol's result for Jacobi forms, may be proved
by extending Jacobi forms to functions on $\c H_{n+j}$ 
which satisfy a certain equivariance property and
applying the same results used to prove Bol's result for Siegel modular forms, Conjecture 1 of \cite{Bump-Choie}.

One may ask whether it is also possible to prove
theorem \ref{Main thm: jacobi} by developing 
a version of proposition \ref{U(g) version of Conjecture 2}
for Jacobi forms, which reduces theorem \ref{Main thm: jacobi} to a statement regarding actions of the Lie algebra
of the Jacobi group, and then proving that statement.
In this section, we 
perform a translation for Jacobi forms, 
similar to the one given in 
\cite{Bump-Choie} for Siegel modular forms, and then 
state and prove a result which 
can be regarded as a version of \ref{U(g) version of Conjecture 2}
for Jacobi forms.  This approach will fall short of a second 
proof of theorem \ref{Main thm: jacobi}, because no 
analogue of lemma \ref{M is basically D}
will be proved.
In other words, we shall show that a certain differential
operator which arises naturally from consideration 
of the universal enveloping algebra exhibits the ``recovery'' property characteristic of Bol's result, but we shall not 
prove a result relating this operator back to the more
classical heat operator $L_{\c M}$ considered in \cite{Choie-Kim}.

\subsection{Liftings to the Jacobi group}
For $k \in \frac 12 \Z,$ 
 $\c M \in \sym^2_j \frac 12 \Z,$
and $f: \c H^{(n,j)} \to \C,$ 
define 
$$
\left[\varphi_{k, \c M} f\right](g) :=\left(f\Big|_{k, \c M}g \right) (iI_n, 0).
$$
Clearly, for any function $f,$ the function $\left[\varphi_{k, \c M} f\right]$
satisfies
\begin{equation}\label{weight k index M equivariance property}
\begin{aligned}
\left[\varphi_{k, \c M} f\right]\left(g (0,0,\zeta) \bpm A&B\\-B&A\epm  \right) 
= e^{\c M}( \zeta) \ve^{2k}&\det(A+iB)^{k} \left[\varphi_{k, \c M} f\right](g), \\
&\left(\forall g \in \JC_\R, \; \zeta \in \sym^2_j(\R), \; (A+iB,\ve)\in \wt U(n)\right).
\end{aligned}\end{equation}
Moreover, if $f$ satisfies \eqref{modularity property of Jacobi forms},
then $\varphi_{k, \c M} f$ satisfies 
$$[\varphi_{k, \c M} f](\gm g) =\wt\chi^{-1}(\gm) [\varphi_{k, \c M} f](g), \qquad ( \forall \gm \in \wt \Gm, \; g\in \JC_\R).$$

\subsection{Relating holomorphicity to the action of the Lie algebra}
Let $\jla$ denote the Lie algebra of $\Ja.$  It consists of all matrices of 
the form 
$$
\bpm A&0&B&^\top u\\ v&0&u&Z \\ C&0&-^\top \!A&-^\top v\\ 0&0&0&0 \epm,
\qquad A \in \Mat_{n\times n}, \; B,C \in \sym^2_n, \; u, v \in \Mat_{j\times n},
\; Z \in \sym^2_j.
$$
Identify 
$$\bpm A&0&B&0\\ 0&0&0&0 \\ C&0&-{}^\top \!A&\\ 0&0&0&0 \epm
\text{ with } \bpm A&B \\ C& -{}^\top \!A \epm \in \f{sp}(2n), \text{ and }$$ 
$$\bpm 0&0&0&^\top u\\ v&0&u&Z \\ 0&0&0&-^\top \!v\\ 0&0&0&0 \epm
\text{ with } [u, v, Z] \in \Mat_{j\times n}\times  \Mat_{j\times n}
\times \sym^2_j,
$$
thus inducing a bracket on $\f h^{(n,j)}:= \Mat_{j\times n}\times  \Mat_{j\times n} 
\times \sym^2_j,$
$$\Big[[u_1, v_1, Z_1], [u_1, v_2, Z_1]\Big]
= [0,0,-u_1\,^\top \!v_2-v_2\,^\top \!u_1+u_2\,^\top \!v_1+v_1\t u_2].$$
(A triple in $\f h^{(n,j)}$ will be written with brackets to avoid being mistaken 
for an element of $\c H^{(n,j)}.$)

Next, let 
$$
c_n = \bpm \frac{1}{1+i}I_n & \frac{-i}{1+i}I_n\\ &\\ \frac{1}{1+i}I_n &  \frac{\phantom{-}i}{1+i}I_n\epm.
$$
be the Cayley transform for $Sp(2n)$ (which we
identify with an element of $\Ja_\C$)
and define an action of $U(\jla_\C)$ on smooth functions $\JC_\R \to \C$ 
by twisted right translation 
\begin{equation}\label{twisted right translation}
R^{c_n}(X)\phi(\wt g):= \lim_{t\to 0} \frac1t\left(\phi( \wt g \exp(t c_n^{-1}Xc_n)) -\phi(\wt g)\right), \qquad c_n^{-1}Xc_n\in \jla_\R, 
\end{equation}
extended to all of $\jla_\C$ by $\C$-linearity.
\begin{prop}\label{holomorphicity in terms of action of lie algebra-- jacobi}
A smooth function $f: \c H^{(n,j)}\to \C$ is holomorphic if and only if the
corresponding function $\varphi_{k, \c M} f$ is annihiliated by $R^{c_n}(X)$
for all $X$ of the form 
$$
\bpm 0&0\\ X_0 & 0 \epm, X_0 \in \sym^2_n (\R), \qquad 
[\lambda,0,0], \; \lambda \in \Mat_{j\times n}(\R). 
$$
\end{prop}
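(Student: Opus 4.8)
The plan is to unwind the definitions until ``$\varphi_{k,\c M}f$ is annihilated by all the listed $R^{c_n}(X)$'' becomes literally the Cauchy--Riemann system for $f$ on $\c H^{(n,j)}=\c H_n\times\Mat_{j\times n}(\C)$; here ``holomorphic'' means $\partial f/\partial\bar\tau_{rs}\equiv 0$ for $1\le r\le s\le n$ and $\partial f/\partial\bar z_{rs}\equiv 0$ for all $r,s$. First I would use the cocycle identity for $j_{\frac k2,\c M}$, which gives $f|_{\frac k2,\c M}(gh)=(f|_{\frac k2,\c M}g)|_{\frac k2,\c M}h$, to rewrite
$$
R^{c_n}(X)\varphi_{k,\c M}f(g)=\left.\frac{d}{dt}\right|_{t=0}\Big(F\big|_{\frac k2,\c M}\exp(t\,c_n^{-1}Xc_n)\Big)(iI_n,0),\qquad F:=f\big|_{\frac k2,\c M}g,
$$
where, when $c_n^{-1}Xc_n\notin\jla_\R$, the right side is read off the $\C$-linear extension: split $c_n^{-1}Xc_n=Y'+iY''$ with $Y',Y''\in\jla_\R$ and add the corresponding two (real) one-parameter derivatives, the $Y''$ one weighted by $i$. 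Since each $g\in\JC_\R$ acts biholomorphically on $\c H^{(n,j)}$ and $j_{\frac k2,\c M}(g,\cdot)$ is nowhere zero and holomorphic in $(\tau,z)$ (both $J_{\frac12}(g,\tau)^k$ and the argument of $e^{\c M}$ in its defining formula are holomorphic in $(\tau,z)$), the function $F$ is holomorphic iff $f$ is, and $\bar\partial F$ vanishes at $(iI_n,0)$ iff $\bar\partial f$ vanishes at $g\cdot(iI_n,0)$. As $\JC_\R$ acts transitively on $\c H^{(n,j)}$ ($\Sp(2n,\R)$ is transitive on $\c H_n$, and the Heisenberg factor translates the $z$-fibre transitively because $\Im\tau$ is invertible), it suffices to treat an arbitrary smooth $F$ near $(iI_n,0)$ and show that the numbers $\frac{d}{dt}\big|_{t=0}(F|_{\frac k2,\c M}\exp(tY))(iI_n,0)$, for $Y$ ranging over $\{c_n^{-1}Xc_n\}$ with $X$ as in the statement, vanish simultaneously iff every $\partial F/\partial\bar\tau_{rs}(iI_n,0)$ and every $\partial F/\partial\bar z_{rs}(iI_n,0)$ vanishes.

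For the second step I would expand $(F|_{\frac k2,\c M}h_t)(\mathbf x_0)=j_{\frac k2,\c M}(h_t,\mathbf x_0)F(h_t\cdot\mathbf x_0)$ at $t=0$ (with $\mathbf x_0=(iI_n,0)$, $h_0=e$, $j_{\frac k2,\c M}(e,\cdot)=1$) to get $c_X\,F(\mathbf x_0)+dF_{\mathbf x_0}(\xi_X)$, where $c_X=\partial_t j_{\frac k2,\c M}(h_t,\mathbf x_0)\big|_{t=0}$ and $\xi_X=\partial_t(h_t\cdot\mathbf x_0)\big|_{t=0}$. The content is then two explicit calculations with the Cayley matrix. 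For $X=\bpm 0&0\\X_0&0\epm$, $X_0\in\sym^2_n(\R)$, the element $c_n^{-1}Xc_n$ is, under the embedding $\f{sp}(2n,\C)\subset\jla_\C$, the element underlying the operator $L_X$ of \cite{Bump-Choie}, p.~120; that computation gives $c_X=0$ and shows the vectors $\xi_X$ span $\Span_\C\{\partial/\partial\bar\tau_{rs}\}$ at $\mathbf x_0$. For $X=[\lambda,0,0]$, $\lambda\in\Mat_{j\times n}(\R)$, the element $c_n^{-1}[\lambda,0,0]c_n$ is a Heisenberg element; its real and imaginary parts in $\jla_\R$ exponentiate to translations of the $z$-fibre, along which $j_{\frac k2,\c M}$ takes a value quadratic in $t$ (so $c_X=0$), and the $\C$-linear combination of the two resulting $z$-directions prescribed by the extension of $R^{c_n}$ spans $\Span_\C\{\partial/\partial\bar z_{rs}\}$ at $\mathbf x_0$. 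Granting these, $\frac{d}{dt}\big|_{t=0}(F|_{\frac k2,\c M}h_t)(\mathbf x_0)=dF_{\mathbf x_0}(\xi_X)=\bar\partial F_{\mathbf x_0}(\xi_X)$ depends only on $\bar\partial F(\mathbf x_0)$, and as $X$ varies these functionals separate it; hence simultaneous vanishing is equivalent to $\bar\partial F(\mathbf x_0)=0$, which by Step~1 is equivalent to $f$ being holomorphic.

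I expect essentially all the work to be the bookkeeping in Step~2: carrying the conjugation $c_n^{-1}(\,\cdot\,)c_n$ through the blocks $\bpm 0&0\\X_0&0\epm$ and $[\lambda,0,0]$, splitting the complex results into $\jla_\R$-real and imaginary parts, exponentiating, computing the resulting curves on $\c H^{(n,j)}$ together with the first-order behaviour of $j_{\frac k2,\c M}$ along them, and checking that the mandated $\C$-linear recombination kills every holomorphic derivative at $\mathbf x_0$ while surjecting onto the antiholomorphic ones. This is exactly where the precise normalization of $c_n$ is used; the symplectic half is the Jacobi-group incarnation of the $L_X$ computation of \cite{Bump-Choie}, p.~120, so only the Heisenberg half is genuinely new, and it is routine.
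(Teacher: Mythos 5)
Your proposal is correct and is essentially the paper's argument: both unwind the definitions until annihilation by the listed operators becomes the Cauchy--Riemann system on $\c H^{(n,j)}$, handle the Heisenberg directions by computing the Cayley conjugate of $[\lambda,0,0]$ (which comes out to $-\tfrac12\left([0,e_{rs},0]+i[e_{rs},0,0]\right)$, a purely antiholomorphic direction along which the cocycle's first-order term vanishes), and defer the symplectic directions to the Siegel-case computation of Harris and Bump--Choie. The only organizational difference is that you reduce everything to the base point $(iI_n,0)$ via transitivity and holomorphy of the automorphy factor, whereas the paper works at a general group element and reduces the $\tau$-part to the Siegel case by restricting $f$ to the sections $\tau\mapsto(\tau,\mu+\lambda\tau)$; this is bookkeeping, not a different idea.
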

\begin{proof}
Write $\b x \in \c H^{(n,j)}$ as $(\tau, Z)$ with $\tau \in \c H^n$
and $Z \in \Mat_{j\times n}(\C).$  The first goal is to show that 
$f: \c H^{(n,j)} \to \C$ is holomorphic in the variable $Z$ if and 
only if $\varphi_{k, \c M}f$ is annihilated by $R^{c_n}([\lambda, 0,0])$ for 
every $\lambda \in \Mat_{j\times n}(\R).$
Write $Z=(z_{rs})_{r=1}^j{}_{s=1}^n \in \Mat_{j\times n}(\C)$ as
$U+iV,$ where $U = (u_{rs})_{r=1}^j{}_{s=1}^n$ and $V= (v_{rs})_{r=1}^j{}_{s=1}^n$ are in $\Mat_{j\times n}(\R).$
A function is holomorphic in $Z$ if and only if it 
satisfies: 
\begin{equation}\label{hol in Z variant 1}
\ptl{f}{z_{rs}} = \ptl{f}{u_{rs}} +i\ptl{f}{v_{rs}}=0\qquad  (\forall 1\le r \le j, \; 1\le s \le n).
\end{equation}
Form the matrix $\ptl fZ = (\ptl f{z_{rs}})_{r=1}^j{}_{s=1}^n.$  Then \eqref{hol in Z variant 1} is equivalent to 
\begin{equation}\label{hol in Z variant 2}
\ptl fZ =0, \quad \text{ or } \ptl fZ \cdot A = 0 \text{ for any given }A \in GL(n, \C).
\end{equation}
Now let $\lambda_{rs}^* \in \f h^{(n,j)}_\R$ be the element $[e_{rs},0,0]$
where $e_{rs}$ is the matrix with a $1$ at the $rs$ position and 
zeros everywhere else.  Also define $\mu_{rs}^*$ to be
$[0,e_{rs},0].$
Define the usual (untwisted) action of $\jla_\R$ on functions $\JC_\R\to \C$
by 
$$
R(X)f(\wt g) = \ddt f(\wt g \exp(tX))\Big|_{t=0}.
$$
One checks that 
$$\begin{aligned}
(\lambda, \mu, \zeta) g_0 \exp( t\lambda_{rs}^*)\cdot (i,0)
=\left(g_0\cdot i, \mu + \lambda g_0 \cdot i+i t \lambda_{rs}^* (Ci+D)^{-1}\right),
\\
(\lambda, \mu, \zeta) g_0 \exp( t\mu_{rs}^*)\cdot (i,0)
=\left(g_0\cdot i, \mu + \lambda g_0 \cdot i+ t \mu_{rs}^* (Ci+D)^{-1}\right),
\end{aligned}
$$
for all $(\lambda, \mu, \zeta) \in H^{(n,j)}_\R, g_0 = \bspm *&*\\C&D \espm
\in \Sp(2n, \R), \, t\in \R, \; 1\le r \le j,\; 1\le s \le n.$
It follows that \eqref{hol in Z variant 2}, with $A = (Ci+D)^{-1},$
is equivalent to 
\begin{equation}\label{hol in Z variant 3}
R(\mu_{rs}^*)f+iR(\lambda_{rs}^*)f = 0, 
\qquad (\forall\; 1\le r \le j,\; 1\le s \le n).
\end{equation}
One checks that $c_n(\mu_{rs}^*+ i \lambda_{rs}^*)c_n^{-1}
= -2 \lambda_{rs}^*,$ and this proves that 
$f$ is holomorphic in $Z$ if and only if it is annihilated
by $R^{c_n}( \lambda_{rs}^*)$ for all $1\le r \le j,\; 1\le s \le n.$

Suppose this to be the case.  Then $f$ is holomorphic 
on $\c H^n \times \Mat_{j\times n}(\C)$ if and only if 
$\tau \mapsto f( \tau, \mu + \lambda\tau)$ is holomorphic 
for each fixed $\lambda,\mu \in \Mat_{j\times n}(\R).$  Thus, 
the proposition is reduced to its analogue from the Siegel case,
proved in \cite{Harris}, \S 2.3.1 (cf. \cite{Bump-Choie}, p. 126).
\end{proof}
\subsection{A Jacobi analogue of proposition 
\ref{U(g) version of Conjecture 2}}
\begin{thm}\label{Jacobi U(g) version of Conjecture 2}
Let $V$ be a $\jla_\C$-module.
Say that $v \in V$ is holomorphic if 
\begin{equation}\label{J-holomorphic}\tag{Hol - Jac}
[\lambda ,0,0]v = \bpm I&0\\ X& I \epm 
v = 0,
\end{equation}
that $v$ is of index $\c M$ if
\begin{equation}\label{index M} \tag{ind $\c M$}
[0,0,\zeta]\cdot v = 2\pi i \Tr(\zeta \c M) \cdot v
\qquad \forall \zeta \in \Mat_{j\times j}\C.
\end{equation}
and that $v$ is of weight $k$ if 
\begin{equation}\label{weight k}\tag{weight $k$ - Jac}
\bpm A&0 \\  0&-{}\,^\top \!A\epm
v =  k\Tr A v, \qquad
(\forall A \in \f{gl}(n, \C)).
\end{equation}
Suppose that $v_0$ is holomorphic of 
index $\c M$ and weight $(-r+\frac{n+j+1}2).$
Then 
$
\hat M_+^r  v_0
$
is holomorphic of index $\c M$ and weight $(r+\frac{n+j+1}2).$  Moreover, if $k \ne r$ then 
$\hat M_+^k v_0$ is of weight $(-r+\frac{n+j+1}2+2k)$
and index $\c M,$ but not holomorphic.
\end{thm}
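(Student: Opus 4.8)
The plan is to reduce the theorem to its Siegel analogue, Proposition \ref{U(g) version of Conjecture 2}, for the symplectic Lie algebra of genus $n+j$ — the representation-theoretic shadow of the passage from Jacobi forms on $\c H^{(n,j)}$ to Siegel forms on $\c H_{n+j}$ via $\on{ext}_\c M$ used to prove Theorem \ref{Main thm: jacobi}.

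First, the ``index'' and ``weight'' assertions are formal. The operator $\hat M_+$ raises weight by $2$: the $\mathfrak{gl}(n,\C)$ in $\mathfrak{sp}(2n,\C)\subset\jla_\C$ commutes with the determinant defining $\hat M_+$ up to the scalar $2\Tr$, since that determinant is $\mathfrak{sl}(n,\C)$-invariant and the identity of $\mathfrak{gl}(n,\C)$ scales it; so \eqref{weight k} forces $\hat M_+^k v_0$ to have weight $-r+\frac{n+j+1}2+2k$, which is $r+\frac{n+j+1}2$ when $k=r$. And the center $\{[0,0,\zeta]\}$ of the Heisenberg ideal acts on an index-$\c M$ vector by the scalar $2\pi i\Tr(\zeta\c M)$, hence commutes with everything, so $\hat M_+^k v_0$ is always of index $\c M$. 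The real content is that $\hat M_+^r v_0$ satisfies \eqref{J-holomorphic} and that $\hat M_+^k v_0$ does not for $k\ne r$.

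The key construction — and the main obstacle — is a Lie algebra homomorphism $\Psi\colon\mathfrak{sp}(2(n+j),\C)\to\End_\C(V)$, defined for any $\jla_\C$-module $V$ of index $\c M$. Identify $\jla_\C$ with its image in $\mathfrak{sp}(2(n+j),\C)$ under the embedding implicit in the definition of $\Ja\subset Sp(2n+2j)$: it is the parabolic subalgebra stabilizing a $j$-dimensional isotropic subspace, with its $\mathfrak{gl}(j)$-Levi factor deleted. Since $\c M$ is nondegenerate, the center of the Heisenberg ideal acts invertibly on $V$, so $[u,0,0]$ and $[0,v,0]$ act through a Weyl algebra; realizing the deleted $\mathfrak{gl}(j)$-Levi and the opposite unipotent radical by the standard quadratic expressions in these generators (as in the oscillator representation) yields $\Psi$, which restricts to the given action on $\mathfrak{sp}(2n,\C)$ and on the $u$-type Heisenberg generators. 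One then fixes the hermitian parabolic of $\mathfrak{sp}(2(n+j),\C)$ so that the Jacobi $\fr$, $\f L$ and $\mathfrak{gl}(n,\C)$ lie in its $\pm$-parts and Levi, and takes $\hat M_+$ to be the $\Psi$-image of the genus-$(n+j)$ operator $\det\hat R_{X_{ij}}$. Checking that $\Psi$ respects all brackets — a finite but bookkeeping-heavy computation using the centrality of $[0,0,\zeta]$ and the explicit bracket on $\fh^{(n,j)}$ — is where the work lies; it is the algebraic content of the computations with $\on{ext}_\c M$ and $L_\c M$ in Section \ref{section: Main result a la Choie-Kim}.

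Granting $\Psi$, the theorem follows from Proposition \ref{U(g) version of Conjecture 2} in genus $n+j$, applied to $V$ via $\Psi$, once one checks three compatibilities directly from the construction: (a) a vector satisfying \eqref{J-holomorphic} and \eqref{index M} is genus-$(n+j)$ holomorphic — the part of $\fu^-_{(n+j)}$ lying inside $\jla_\C$ is exactly $\f L$, killed by \eqref{J-holomorphic}, and the complementary part acts through $u$-type Heisenberg generators in the oscillator realization, which also kill it; (b) the genus-$(n+j)$ weight so attached is the Jacobi weight $-r+\frac{n+j+1}2$, the $\mathfrak{gl}(j)$-contribution being pinned down by $\c M$; and (c) $\hat M_+$, being $\Psi$ of an element of the abelian $\fu^+_{(n+j)}$, commutes with the $u$-type generators, so $\hat M_+^k v_0$ retains the $[\lambda,0,0]$-part of \eqref{J-holomorphic}. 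Writing $-r+\frac{n+j+1}2=-(r-1)+\frac{(n+j)-1}2$, Proposition \ref{U(g) version of Conjecture 2} then gives that $\hat M_+^r v_0$ is genus-$(n+j)$ holomorphic — hence killed by $\f L$ — and with (c) it satisfies \eqref{J-holomorphic}; for $k\ne r$ it gives that $\hat M_+^k v_0$ is not genus-$(n+j)$ holomorphic, so by (a) it cannot satisfy \eqref{J-holomorphic}. All that is nontrivial is the construction and verification of $\Psi$; the rest is matching normalizations.
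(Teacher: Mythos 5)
The crux of your argument---the extension of the $\jla_\C$-action on $V$ to a Lie algebra homomorphism $\Psi\colon\f{sp}(2(n+j),\C)\to\End_\C(V)$ restricting to the given action---does not exist, and this is a structural obstruction, not a bookkeeping one. The center $\f z=\{[0,0,\zeta]\}$ is central in $\jla_\C$, so on $U(\jla_\C)v_0$ it acts by the scalars $2\pi i\Tr(\c M\zeta)$, and every operator on that submodule therefore commutes with it. But inside $\f{sp}(2(n+j),\C)$ the deleted Levi factor $\fgl(j,\C)$ satisfies $[a,\zeta]=a\zeta+\zeta\,^\top\! a\in\f z$; any homomorphism $\Psi$ agreeing with the given action on $\f z$ would force $[\Psi(a),\Psi(\zeta)]=0$, whereas $[a,\zeta]$ must act by $2\pi i\Tr\left(\c M(a\zeta+\zeta\,^\top\! a)\right)$, which is nonzero already for $a=I_j$ and any $\zeta$ with $\Tr(\c M\zeta)\ne 0$. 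So only $\f o(\c M)\subset\fgl(j,\C)$ can ever act---this is the Lie-algebra shadow of the fact that $GL(j)$ moves the central character $\c M$, and it is exactly why the image of $\on{ext}_{\c M}$ is a $\jla$-stable but not $\f{sp}(2(n+j))$-stable subspace of $C^\infty(\c H_{n+j})$. Your hedged version of $\Psi$ (agreeing with the given action only on $\f{sp}(2n,\C)$ and the $u$-type generators) does not save the argument, because $\hat M_{+,n+j}$ is a polynomial in the entries of $\b Z$ and $\b U$ as well as in the $B$-block, so a conclusion about $\Psi(\hat M_{+,n+j})^k v_0$ would say nothing about $\hat M_{+,n+j}^k\cdot v_0$, which is what the theorem concerns. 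In the proof of Theorem \ref{Main thm: jacobi} the genus-$(n+j)$ proposition is applied to the ambient module $\Mer(\c H_{n+j})$, which the concrete setting supplies for free; in the abstract setting there is no such ambient module to pass to.

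The paper goes in the opposite direction. Rather than enlarging $\f{sp}(2n)\ltimes\f h^{(n,j)}$ to $\f{sp}(2(n+j))$, it embeds a second copy of the \emph{smaller} algebra $\f{sp}(2n,\C)$ into a localization of $U(\jla_\C)$ by the twisted oscillator-type map $\b T(X)=2\det\b Z\cdot X-\lambda(T(X))$, where $T(X)$ is quadratic in $\f v$ and of degree $j-1$ in $\f z$; after dividing by the scalar $2\det\c M$ this yields a genuine second action $*$ of $\f{sp}(2n,\C)$ commuting with $U(\f h^{(n,j)})$. Holomorphicity and weight transfer from $\cdot$ to $*$ (the weight drops by $j/2$, landing at $-r+\frac{n+1}2$), the determinant identity $(\det\b Z)^j\,\b T(\hat M_{+,n})=(\det\b Z)^n\,\hat M_{+,n+j}$ identifies $\hat M_{+,n}^k* v_0$ with a nonzero multiple of $\hat M_{+,n+j}^k\cdot v_0$, and Proposition \ref{U(g) version of Conjecture 2} is then invoked in genus $n$, not $n+j$. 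To rescue your outline you would have to replace ``extend the action to $\f{sp}(2(n+j))$'' by a descent of this dual-pair type.
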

\section{Proof of theorem \ref{Jacobi U(g) version of Conjecture 2}}

\begin{defn}
Let $\f v$ denote the subspace
of $\f h^{(n,j)}$ consisting 
of all matrices of the form 
$$
\bpm 0&0&0&^\top u\\ v&0&u&0 \\ 0&0&0&-^\top \!v\\ 0&0&0&0 \epm,
$$
and let $\f z$ denote the center of $\jla.$
\end{defn}
\begin{defn}
Define $T_0: \f h^{(n,j)} \to \f{sp}(2n)$ by 
$$\begin{aligned}
T_0
\bpm 0&0&0&^\top u\\ v&0&u&Z \\ 0&0&0&-^\top \!v\\ 0&0&0&0 \epm
&=\bpm 0&0&0&^\top u\\ v&0&u&0 \\ 0&0&0&-^\top \!v\\ 0&0&0&0 \epm
\bpm
 0&0&0&0\\
 0&0&0&0\\
 0&0&0&0\\
 0&\wt Z&0&0
\epm
\bpm 0&0&0&^\top u\\ v&0&u&0 \\ 0&0&0&-^\top \!v\\ 0&0&0&0 \epm\\
&=\bpm ^\top u\wt Z v &0&^\top u\wt Z u&0\\ 0&0&0&0 \\ -^\top \!v\wt Z v &0&-^\top \!v\wt Z u&0\\ 0&0&0&0 \epm,
\end{aligned}
$$
where $\wt Z$ is the ``classical adjoint''
of $Z.$ 
\end{defn}
Observe that $T_0$ is 
 $\f{sp}(2n)$-equivariant.  Furthermore, it is nonzero.
 As the adjoint representation of $\f{sp}(2n)$ is 
 irreducible, it follows that $T_0$ is surjective.
 This gives an injective, $\f{sp}(2n)$-equivariant 
 map from  the dual of $\f{sp}(2n)$ into the space of polynomial functions on $\f h^{(n,j)}.$
 One may identify $\jla$ with its dual using the 
 invariant bilinear form $(X, Y) \mapsto \Tr(XY).$  
 Then each of the spaces 
  $\f{sp}(2n),$ $\f v$ and $\f z$  is identified with its dual 
  and 
 we obtain an injective $\f{sp}(2n)$-equivariant map
 $T:\f{sp}(2n)\to \sym^2\f v \otimes \sym^{j-1} \f z.$
 
The map $T$ may be described concretely as follows.
 Define $\b Z\in \sym^2_j(\f z)$ to be the $j \times j$ matrix with $i,l$ entry equal to $
(E_{n+i,2n+j+l}+E_{n+l,2n+j+i}) \in \f z,$
define $\b V \in \Mat_{j\times n} \f v$ to be the 
$j \times n$ matrix with $i,l$ entry equal to
$E_{n+i, l}-E_{n+j+l, 2n+j+i}\in \f v,$ and define $\b U \in \Mat_{j\times n} \f v$ to be the 
$j \times n$ matrix with $i,l$ entry equal to
$E_{n+i, n+j+l}+E_{l,2n+j+i} \in \f v.$
Then $$T(X) = \Tr\left(\tr X
\bbm \tr\b U\\ \tr \b V
\ebm 
\wt{\b Z}
\bbm \b V& \b U\ebm 
 \right).$$
 
   \begin{prop}\label{prop: invariance of T}
  For all $V \in \f v$ and all $X \in \f{sp}(2n)$ one has
  $$
  \ad(V)T(X) = 2\det \b Z \cdot  [V,X].
  $$
 \end{prop}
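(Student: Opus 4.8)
The plan is to compute $\ad(V)T(X)$ head-on from the formula for $T(X)$, exploiting that $\ad(V)$ is a derivation of $U(\jla)$ which kills the center $\f z$. Since every entry of $\b Z$ lies in $\f z$, the derivation $\ad(V)$ annihilates all of $S(\f z)\subset U(\jla)$; in particular it kills each entry of $\wt{\b Z}$ and the scalar $\det\b Z$, and it commutes past the constant matrix $\tr X$. So the only input really needed is $\ad(V)$ applied entrywise to $\b U$ and $\b V$. Writing $V=[u,v,0]\in\f v$, one checks from the bracket of $\jla$ recorded above — which carries $\f v\times\f v$ into $\f z$ — that $[V,\b U_{il}]$ and $[V,\b V_{il}]$ are $\C$-linear combinations of the entries $\b Z_{im}$ of $\b Z$, the coefficients being the matrix entries of $v$ and of $-u$ respectively; in matrix form $\ad(V)\b U=\b Z\,v$ and $\ad(V)\b V=-\b Z\,u$, two $j\times n$ matrices with entries in $\f z$.

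I would then substitute these into the formula for $T(X)$ and expand by the Leibniz rule. The crucial feature is that $\wt{\b Z}$ is central, so it may be slid past the $\f v$-factors, and in each term of the expansion the matrix $\b Z$ produced by $\ad(V)$ ends up adjacent to $\wt{\b Z}$; the identity $\b Z\wt{\b Z}=\wt{\b Z}\b Z=(\det\b Z)I_j$ then lets one pull the scalar $\det\b Z$ out in front. What survives inside the trace is a product of ordinary ($\C$-valued) matrices assembled from $X$, from the block structure of $\f{sp}(2n)\hookrightarrow\jla$, and from $\b U,\b V$ carrying their $\f v$-entries — that is, a single element $w\in\f v$ — so that $\ad(V)T(X)=\det\b Z\cdot w$.

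It remains to identify $w$ with $2[V,X]$. For this I would identify $\f v$ with $\C^{j}\otimes\C^{2n}$, with $\f{sp}(2n)$ acting on the $\C^{2n}$ factor; under this identification the matrix $\bbm\b V&\b U\ebm$ becomes the ``generic'' element of $\f v$ of that shape, and collapsing the (two) traces against it rewrites $w$ as an explicit element of $\f v$ expressed through the action of $X$. On the other side, $[V,X]=\ad(V)X$ is immediately written out in matrix form by bracketing the matrices in the definition of $\jla$. Matching the two expressions yields the identity; the factor $2$ is the usual one from applying a derivation to something quadratic in the $\f v$-variables (namely quadratic in $\b U,\b V$, equivalently the symmetry of $\b Z$), which is why $T(X)$ lands in $\sym^2\f v\otimes\sym^{j-1}\f z$ rather than in $\f v\otimes\f v\otimes\sym^{j-1}\f z$.

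The main obstacle is organizational rather than conceptual: keeping the index conventions for $\b U,\b V,\b Z$ consistent, tracking the transposes in $\tr X,\tr\b U,\tr\b V$, and checking that the two collapsed matrix expressions really coincide. One small foundational remark should be made at the start — although $T(X)$ is written as a product in $U(\jla)$, it represents the intended element of $\sym^2\f v\otimes\sym^{j-1}\f z$ only up to lower-order terms in $S(\f z)$ coming from reordering the two $\f v$-factors — but this does not affect anything, since $\ad(V)$ annihilates $S(\f z)$.
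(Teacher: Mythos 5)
Your argument is correct in outline, but it takes a genuinely different route from the paper's. The paper's proof is ``soft'': it observes that $\{X\in\f{sp}(2n):\ \ad(V)T(X)=2\det \b Z\cdot[V,X]\ \ \forall V\in\f v\}$ is an $\f{sp}(2n)$-submodule of the adjoint representation, which is irreducible, so it suffices to exhibit one nonzero $X$ for which the identity holds; it then takes the single root vector $X=E_{1,n+j+1}$, for which $T(X)=\b u\wt{\b Z}\,^\top\!\b u$ is as simple as possible, and checks the identity against basis vectors $V$ of $\f v$ by exactly the mechanism you describe ($\ad(V)$ kills $S(\f z)$, $\ad(V)\b u$ is a row of $\b Z$, and $\b Z\wt{\b Z}=\det\b Z\cdot I_j$). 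You instead propose the full direct computation for arbitrary $X$ and $V$. Your structural inputs are all correct: with the paper's bracket on $\f h^{(n,j)}$ one does get $\ad(V)\b U=\b Z v$ and $\ad(V)\b V=-\b Z u$ for $V=[u,v,0]$, and the Leibniz rule together with $\b Z\wt{\b Z}=(\det\b Z)I_j$ produces $\det\b Z$ times an element of $\f v$, the factor $2$ coming from the two $\f v$-slots. What your approach buys is an explicit verification needing no representation theory; what it costs is the final bookkeeping step (matching the surviving element of $\f v$ with $2[V,X]$), which you describe but do not carry out, and which the equivariance-plus-irreducibility argument lets the paper skip by reducing to one matrix entry. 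Your closing caveat about $T(X)$ being defined only up to lower-order terms in $S(\f z)$ is moot in the paper's setup, since the product defining $T(X)$ is taken in the symmetric algebra of $\f h^{(n,j)}$ (as the paper's proof notes), but your observation that $\ad(V)$ annihilates $S(\f z)$ disposes of the issue either way.
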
 
 \begin{proof}
 The space
 $$
 \{ X \in \f{sp}(2n): 2 \det \b Z \cdot [V,X] 
 - ad(V)\cdot T(X)  = 0 \; \forall V \in \f v\}
 $$
  is clearly an $ \f{sp}(2n)$-submodule.
  We claim that it is $\f{sp}(2n).$  
It suffices to show that it is nonzero.
  The space $\f v$ has a basis 
  $$\{E_{n+i,l}-E_{n+j+l,2n+j+i}: 1 \le i \le j, \; 1 \le l \le n\}
  \cup \{E_{n+i,n+j+l}+E_{l,2n+j+i}: 1 \le i \le j, \; 1 \le l \le n\}.
  $$
  It suffices to consider $V$ in this basis.  We henceforth 
  assume $V\in \f v$ is in this basis.
  
 Take 
$X = E_{1,n+j+1}.$  
Then $$[V,X]=\begin{cases}
E_{n+i,n+j+1}+E_{1,2n+j+i}
&
V=E_{n+i,1}-E_{n+j+1, 2n+j+i} \; (1\le i \le j)\\
0, & V\text{ not of this form.}
\end{cases}$$
One may express $T(X),$ 
as 
$
\b u \widetilde{\b Z} \,^\top\!\b u ,
$
where
$$\b u= [E_{n+1,n+j+1}+ E_{1, 2n+j+1}, \dots , E_{n+j, n+j+1}+E_{1, 2n+2j}] \in \Mat_{1\times j}
\f v,$$
and the product should be computed in the symmetric
algebra of $\f h^{(n,j)}.$
Now, $V$ acts trivially on $\f z,$ so 
$$
\ad(V)\left(\b u\widetilde {\b Z} \,^\top\!\!\b u\right)
= \left(\ad(V)\b u\right) \widetilde  {\b Z}  \,^\top\!\! \b u + \b u \widetilde {\b Z} \left(\ad(V)\,^\top\!\!\b u\right)
$$
Further, 
$$
\ad(V)U = 
\begin{cases}
[E_{n+1, 2n+j+i}+ E_{n+i, 2n+j+1}, \dots , E_{n+j, 2n+j+i}+E_{n+i,2n+2j}], & 
V=E_{n+i,1}-E_{n+j+1, 2n+j+i}\\& (1\le i \le j)\\
0, & V\text{ not of this form.}
\end{cases}$$
In the case when $\ad(V)\b u$ is nonzero it agrees
with the $i$th row/column of the symmetric 
matrix $\b Z.$  
It follows that 
$\ad(V)\b u\wt {\b Z}$ is equal to $\det \b Z$ times the 
$i$th standard basis vector.
Hence, $\left(\ad(V)\b u\right) \widetilde  {\b Z}  \,^\top\!\! \b u$ equals $\det \b Z$ times the $i$th entry of $\b u,$
which is $E_{n+i,n+j+1}+E_{1,2n+j+i}.$  It follows 
by symmetry that 
$\ad(V)T(X) = 2 \det \b Z (E_{n+i,n+j+1}+E_{1,2n+j+i}).$
     \end{proof}
\begin{defn}
Define $\b T: \f{sp}(2n)\to U(\jla)$
by 
$\b T(X) = \lambda(2\det \b Z\cdot X - T(X))= 2\det \b Z\cdot X -\lambda( T(X)).$  Here, 
$\lambda$ denotes symmetrization as on \cite{Bump-Choie}, p. 128.
\end{defn}
It follows from proposition \ref{prop: invariance of T}
that the image of $\b T$ commutes with
$U(\f h^{(n,j)}) \subset U(\jla).$
\begin{prop}
One has $$
\b T(X) \b T(Y)- \b T(Y) \b T(X) =: [\b T(X) ,\b T(Y)] 
=2 \det \b Z\b T([X,Y]).
$$
\end{prop}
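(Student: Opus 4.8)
The plan is to expand the commutator $[\b T(X),\b T(Y)]$ and reduce every resulting term to Proposition~\ref{prop: invariance of T} together with the $\f{sp}(2n)$-equivariance of $T$ and of the symmetrization map. Set $c:=2\det\b Z$. Because $\b Z$ has entries in $\f z$, the element $c$ lies in $U(\f z)=S(\f z)$, and it is central in $U(\jla)$ since $\f z$ is the center of $\jla$. Using $\b T(X)=cX-\lambda(T(X))$, $\b T(Y)=cY-\lambda(T(Y))$, and the centrality of $c$, one obtains
\[
[\b T(X),\b T(Y)]=c^2[X,Y]-c\,[X,\lambda(T(Y))]-c\,[\lambda(T(X)),Y]+[\lambda(T(X)),\lambda(T(Y))],
\]
so it remains only to evaluate the three non-leading commutators.

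I would first handle the two mixed commutators. For $X\in\f{sp}(2n)$ the derivation $\ad(X)$ preserves $\f h^{(n,j)}$ (since $\f{sp}(2n)$ normalizes $\f h^{(n,j)}$), and the symmetrization map $\lambda\colon S(\f h^{(n,j)})\to U(\f h^{(n,j)})$ intertwines the action of $\ad(X)$ on source and target. As the image of $T$ lies in $\sym^2\f v\otimes\sym^{j-1}\f z\subset S(\f h^{(n,j)})$ and $T$ is $\f{sp}(2n)$-equivariant, this gives $[X,\lambda(T(Y))]=\lambda(\ad(X)T(Y))=\lambda(T([X,Y]))$ and, symmetrically, $[\lambda(T(X)),Y]=-\lambda(T([Y,X]))=\lambda(T([X,Y]))$. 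Hence these two terms together contribute $-2c\,\lambda(T([X,Y]))$.

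The substantive step is the purely Heisenberg commutator $[\lambda(T(X)),\lambda(T(Y))]$. Set $a:=\lambda(T(X))\in U(\f h^{(n,j)})$ and consider the inner derivation $\ad(a)$ of the subalgebra $U(\f h^{(n,j)})$. I claim that $\ad(a)=c\cdot\ad(X)$ as derivations of $U(\f h^{(n,j)})$. Since $\f h^{(n,j)}=\f v\oplus\f z$ generates this algebra, it is enough to check the identity on $\f v$ and on $\f z$. On $\f z$ both sides vanish, $\f z$ being central. On $\f v$, Proposition~\ref{prop: invariance of T}, transported to $U(\jla)$ by applying $\lambda$ (which acts as the identity on products of central elements with a single Lie algebra element, so that $\lambda$ carries $2\det\b Z\cdot[V,X]$ to $2\det\b Z\cdot[V,X]$), reads $[V,\lambda(T(X))]=2\det\b Z\cdot[V,X]$; hence $\ad(a)(V)=[\lambda(T(X)),V]=2\det\b Z\cdot[X,V]=c\cdot\ad(X)(V)$. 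As $c$ is central, $c\cdot\ad(X)$ is again a derivation, so two derivations agreeing on generators agree everywhere. Applying this to $\lambda(T(Y))$ and using the computation of the previous paragraph, $[\lambda(T(X)),\lambda(T(Y))]=\ad(a)(\lambda(T(Y)))=c\cdot\ad(X)(\lambda(T(Y)))=c\cdot\lambda(T([X,Y]))$.

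Assembling the three contributions,
\[
[\b T(X),\b T(Y)]=c^2[X,Y]-2c\,\lambda(T([X,Y]))+c\,\lambda(T([X,Y]))=c\bigl(c[X,Y]-\lambda(T([X,Y]))\bigr)=c\,\b T([X,Y]),
\]
which is the claimed identity since $c=2\det\b Z$. I expect the main obstacle to be bookkeeping in the third paragraph: one must check carefully that $\ad(a)$ genuinely restricts to a derivation of $U(\f h^{(n,j)})$ (it does, as $a$ lies in that subalgebra), that Proposition~\ref{prop: invariance of T} is being transported correctly from the symmetric algebra $S(\jla)$ to $U(\jla)$, and that the degree counts in $\f v$ and in $\f z$ match on both sides so that no term is inadvertently dropped. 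Everything else is formal manipulation with central elements and the Leibniz rule for commutators.
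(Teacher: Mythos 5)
Your proof is correct and follows essentially the same route as the paper: the paper's version collapses your last two terms at once by noting (as a consequence of Proposition \ref{prop: invariance of T}) that $\lambda(T(X))\in U(\f h^{(n,j)})$ commutes with $\b T(Y)$, which is exactly your identity $\ad(\lambda(T(X)))=2\det\b Z\cdot\ad(X)$ on $U(\f h^{(n,j)})$ stated in the equivalent form $\ad(\b T(X))=0$ there. Your fuller term-by-term expansion and the derivation-on-generators argument are just a more explicit packaging of the same computation.
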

\begin{proof}
Indeed, 
$$\begin{aligned}
2\det \b Z\b T([X,Y]) &= 4(\det \b Z)^2 [X,Y] - \det \b Z\lambda(\ad(X)T(Y)) \\
&=4 (\det \b Z)^2 [X,Y] - 2\det \b Z[X,\lambda(T(Y))] 
\\
[\b T(X) ,\b T(Y)] 
&= [2\det \b Z\cdot X -\lambda( T(X)), \b T(Y)]
= 2\det \b Z [X,  \b T(Y)]\\
&=4( \det \b Z)^2 [X,Y] - 2\det\b Z
[X,\lambda(T(Y))].\end{aligned}
$$
\end{proof}
\begin{defn}\label{def:  the action *}
Let $V$ be a $\jla$-module such that 
$[0,0,\zeta]\cdot v = 2\pi i \Tr(\c M \zeta)  v$
for all $\zeta \in \Mat_{j\times j},$ symmetric,
and all $v \in V.$
Define $X * v = \frac1{2\det \c M} \b T(X)v.$
The previous proposition shows that 
$X*Y*v-Y*X*v=[X,Y]*v.$  Hence, $*$ extends to an 
action of $U(\f{sp}(2n))$ on $V.$  
\end{defn}
\begin{lem}\label{lem:  holomorphic for . implies holomorphic for *}
Let $V$ be a $U(\jla_\C)$-module with action 
$\cdot$ and define an alternate action of $\f{sp}(2n, \C)$
by $*$ as above.  Suppose $v\in V$
satisfies
$$
\bpm 
0&0\\X&0 
\epm \cdot v = [w, 0,0]\cdot v =0,
\forall 
X \in \Mat_{n\times n}\C, \text{ symmetric, }
w \in \Mat_{j\times n}\C.
$$
Then 
$$
\bpm 
0&0\\X&0 
\epm * v = 0 \forall 
X \in \Mat_{n\times n}\C, \text{ symmetric.}
$$
\end{lem}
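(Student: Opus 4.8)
The plan is to unwind the definition of $*$ and reduce the statement to the two holomorphicity hypotheses on $v$. By definition $\bpm 0&0\\X&0\epm * v = \frac{1}{2\det\c M}\,\b T\bpm 0&0\\X&0\epm v$, and $\b T\bpm 0&0\\X&0\epm = 2\det\b Z\cdot\bpm 0&0\\X&0\epm - \lambda\bigl(T\bpm 0&0\\X&0\epm\bigr)$, where $\lambda$ is symmetrization. Since $\bpm 0&0\\X&0\epm\cdot v = 0$ by hypothesis, the first summand kills $v$ (by associativity of the action); so the statement reduces to the single assertion $\lambda\bigl(T\bpm 0&0\\X&0\epm\bigr)\cdot v = 0$.

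To see this I would combine two observations. First, evaluating the explicit formula for $T$ on this element gives $T\bpm 0&0\\X&0\epm = \Tr\bigl(X\,\tr\b V\,\wt{\b Z}\,\b V\bigr)$, which is a polynomial in the entries $\b V_{il}$ of $\b V$ together with the central elements $\b Z_{pq}$ only (no $\b U$ appears). Because the $\b V_{il}$ span an abelian subspace of $\f h^{(n,j)}$ (any two of them bracket to zero) and commute with the central $\b Z_{pq}$, each monomial occurring in $T\bpm 0&0\\X&0\epm$ is a product of pairwise-commuting Lie algebra elements, on which $\lambda$ acts as the identity; thus $\lambda\bigl(T\bpm 0&0\\X&0\epm\bigr)$ is literally the same expression read as a product in $U(\jla)$. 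Second, although the Heisenberg hypothesis only gives $[w,0,0]\cdot v = 0$, a short matrix computation shows $\bigl[\bpm 0&0\\I_n&0\epm,\b U_{il}\bigr] = -\b V_{il}$, where $\b U_{il}=[e_{il},0,0]$; since both $\bpm 0&0\\I_n&0\epm$ and $\b U_{il}$ annihilate $v$ by hypothesis, so does every $\b V_{il}$. Combining the two, in $\lambda\bigl(T\bpm 0&0\\X&0\epm\bigr)\cdot v$ one may move the central $\wt{\b Z}$-factors to the left and push some $\b V_{il}$ onto $v$, where it acts by zero; hence $\lambda\bigl(T\bpm 0&0\\X&0\epm\bigr)\cdot v = 0$, which is what we needed.

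The part requiring the most care is the bookkeeping: verifying $T\bpm 0&0\\X&0\epm = \Tr\bigl(X\,\tr\b V\,\wt{\b Z}\,\b V\bigr)$ (in particular that no $\b U$ survives) and the commutator identity $\bigl[\bpm 0&0\\I_n&0\epm,\b U_{il}\bigr] = -\b V_{il}$; both are routine but somewhat tedious computations with $(2n+2j)\times(2n+2j)$ matrices. Everything else is formal. The conceptual content is just that $T$ of an element $\bpm 0&0\\X&0\epm$ lands in the symmetric algebra of a single abelian part of $\f h^{(n,j)}$ together with its center, so that symmetrization introduces no central correction terms, and that holomorphicity for the $\cdot$-action forces that abelian part, and hence all of the linear part $\f v$, to annihilate $v$.
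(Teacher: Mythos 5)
Your proof is correct, and it follows the same basic strategy as the paper's: expand $\b T\bpm 0&0\\X&0 \epm = 2\det \b Z\cdot \bpm 0&0\\X&0\epm - \lambda\bigl(T\bpm 0&0\\X&0\epm\bigr)$, kill the first term using the hypothesis together with the centrality of $\f z$, and kill the second by observing that $T\bpm 0&0\\X&0\epm$ lies in $\sym^2$ of an abelian half of $\f v$ tensored with powers of $\f z$, so that symmetrization is the identity on its monomials and a factor annihilating $v$ can be pushed onto $v$. Where you go beyond the paper is in the bookkeeping, and your extra step is genuinely needed. The paper's one-line proof asserts $T\bpm 0&0\\X&0\epm \in \sym^2 \f l$ with $\f l = \{[w,0,0]\}$, the half that annihilates $v$ by hypothesis, and stops there; but the explicit formula for $T$ gives, as you computed, $\Tr\bigl(X\,\tr \b V\,\wt{\b Z}\,\b V\bigr)$, and $\b V_{il}=[0,e_{il},0]$ spans the \emph{other} half $\f r$ (the paper's labelling of the two halves of $\f v$ is not internally consistent; compare the definitions of $\b U,\b V$ before Proposition \ref{prop: invariance of T} with the redefinition of $\b U$ in the proof of Lemma \ref{lemma relating M+n and M+n+j}). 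Your commutator identity $\bigl[\bpm 0&0\\ I_n&0\epm,[e_{il},0,0]\bigr]=-[0,e_{il},0]$ shows that the two hypotheses jointly force all of $\f v$, not just $\f l$, to annihilate $v$, which closes exactly the gap left by the mismatch; after that, either half works. So your argument is complete, and it can be read either as a careful version of the paper's proof or as a small repair of it.
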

\begin{proof}
Write $\f v = \f l + \f r$
where 
$\f l = \{ [w,0,0], w \in \Mat_{j\times n}\}$
and $\f r =  \{ [0,\mu,0], \mu \in \Mat_{j\times n}\}.$
Then it follows from the definition of $T$ that
$$T\bpm 
0&0\\X&0 
\epm 
\in \sym^2 \f l \subset \sym^2 \f v,
$$
and the result follows.
\end{proof}
\begin{lem}\label{lemma relating . and * for Siegel Levi}
Fix $\c M \in \Mat_{j\times j}\Z,$ symmetric.
Let $V$ be a $U(\jla_\C)$-module such that 
$[0,0,\zeta]\cdot v= 2\pi i \Tr( \c M \zeta )v$
for all $\zeta\in \Mat_{j\times j}\C$ and $v \in V.$ 
Let $\cdot$ denote the given action of $\jla_\C$
on $V$ and define a second action $*$ of $\f{sp}(2n, \C)$
as in definition \ref{def:  the action *}.
Suppose $v_0\in V$ satisfies
$$
[w, 0, 0]\cdot v =0\qquad \forall w
\in \Mat_{j\times n}\C.
$$
Then 
$$
\lambda\left( T\bpm X\\ &^\top \!X \epm \right)
\cdot v = \frac{j}2 \Tr X\cdot v,
$$
whence
$$
\bpm X\\ &^\top \!X \epm 
\cdot v = k\Tr(X)v 
\implies \bpm X\\ &^\top \!X \epm 
* v = (k-\frac j2)\Tr(X)v 
$$
\end{lem}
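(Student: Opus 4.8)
\textit{Plan.} The two assertions are linked: the displayed implication follows from the identity $\lambda(T(H_X))\cdot v=\tfrac j2\Tr(X)\,v$ --- writing $H_X:=\bpm X&0\\0&-{}^\top\! X\epm$ --- together with the weight hypothesis $H_X\cdot v=k\Tr(X)v$ and the definitions $\b T(X)=2\det\b Z\cdot X-\lambda(T(X))$, $X*v=\tfrac1{2\det\mathcal M}\b T(X)v$; here one uses that the index hypothesis makes the central element $\det\b Z\in U(\f z)$ act on $v$ by a scalar. So the real content is the computation of $\lambda(T(H_X))\cdot v$.

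To carry it out I would first locate $T(H_X)$. Decompose $\f v=\f l\oplus\f r$ as in the preceding lemma, so that the entries of $\b U$ lie in $\f l$ and those of $\b V$ lie in $\f r$. Reading off the explicit formula for $T$ at the diagonal block $H_X$ --- in parallel with the fact, used in the preceding lemma, that $T$ of the strictly-lower block of $\f{sp}(2n)$ lies in $\sym^2\f l$ --- one finds that $H_X$ contributes only mixed terms, so $T(H_X)$ is a combination $\sum_{a,b,p,q} X_{ab}\,\wt{\b Z}_{pq}\,\b U_{pa}\,\b V_{qb}$ ($a,b\in\{1,\dots,n\}$, $p,q\in\{1,\dots,j\}$), an element of $(\f l\cdot\f r)\otimes\sym^{j-1}\f z$. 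The one delicate point at this stage is the sign that makes the two contributions coming from the two diagonal $n\times n$ blocks of the underlying $2n\times 2n$ matrix product reinforce rather than cancel.

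Next I would symmetrize and act on $v$. Since $\f z$ is the centre of $\jla$, symmetrization factors out the central $\wt{\b Z}$-part and only the pair $\b U_{pa},\b V_{qb}$ gets symmetrized: $\lambda(\wt{\b Z}_{pq}\b U_{pa}\b V_{qb})=\wt{\b Z}_{pq}\cdot\tfrac12(\b U_{pa}\b V_{qb}+\b V_{qb}\b U_{pa})$. Applying this to $v$ and using $[w,0,0]\cdot v=0$, so $\b U_{pa}v=0$, each term collapses to $\tfrac12\,\wt{\b Z}_{pq}\,[\b U_{pa},\b V_{qb}]\,v$; as $[\f l,\f r]\subseteq\f z$ and $\f z$ acts on $v$ by scalars, $\lambda(T(H_X))\cdot v$ is a scalar multiple of $v$. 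To pin down that scalar, insert the Heisenberg bracket and the entry formulae for $\b U,\b V,\b Z$, giving $[\b U_{pa},\b V_{qb}]=-\delta_{ab}\b Z_{pq}$; the sum over $p,q$ then collapses via the cofactor identity $\wt{\b Z}\b Z=\det\b Z\cdot I_j$ (legitimate because $U(\f z)=S(\f z)$ is commutative) and the symmetry of $\b Z$, so that $\sum_{p,q}\wt{\b Z}_{pq}\b Z_{pq}=\Tr(\wt{\b Z}\b Z)=j\det\b Z$, while the $X$-dependence contracts to $\Tr(X)$. Reconciling the $\tfrac12$ from symmetrization against the $2\det\b Z$ in $\b T$ and the normalization relating $\b Z$ to $\mathcal M$ then delivers $\lambda(T(H_X))\cdot v=\tfrac j2\Tr(X)\,v$, whence the displayed implication.

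The only genuine obstacle is this closing piece of constant bookkeeping --- keeping straight the transposes, the symmetry of $\b Z$ and $\wt{\b Z}$, and the several factors of $2$ and the sign, so that the coefficient comes out $j/2$ (and not with a spurious sign or power of $\det\mathcal M$). The structural ingredients --- centrality of $\f z$, the annihilation of $v$ by $\f l$, and the cofactor identity --- are routine.
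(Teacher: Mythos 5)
Your argument is correct and follows essentially the same route as the paper's: both rest on the observation that $T$ of a block-diagonal element of $\f{sp}(2n)$ lies entirely in the mixed $\f l\cdot\f r$ part of $\sym^2\f v$, so that after symmetrization and the hypothesis $\f l\cdot v=0$ only the Heisenberg commutator term survives, and this contracts against the cofactor matrix via $\Tr(\wt{\b Z}\b Z)=j\det\b Z$ while the $X$-dependence contracts to $\Tr X$. The paper packages the same computation through the isomorphisms $\f{sp}(2n)\cong\sym^2 V_{2n}$, $\f v\cong U_j\otimes V_{2n}$, $\f z\cong\sym^2 U_j$ and reduces to a rank-one element $v_1\cdot v_2$ with $v_1\in W$, $v_2\in W^\perp$, whereas you work in explicit matrix coordinates; the final constant bookkeeping (including the sign ensuring the two diagonal-block contributions reinforce) that you defer is left at essentially the same level of explicitness in the paper's own proof.
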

\begin{proof}
Write $V_{2n}$ for the standard representation
of $Sp(2n),$ realized as column vectors, and 
write $U_j$ for the standard representation 
of $GL(j),$ also realized as column vectors.  
Keeping in mind that $\jla$ inherits an action 
of $Sp(2n) \times GL(j)$ from inclusion into 
$\f{sp}(2n+2j),$ one has isomorphisms
$$
\f{sp}(2n) \cong \sym^2 V_{2n},
\qquad
v_1 \cdot v_2\in \sym^2 V_{2n} 
 \mapsto (v_1\;^\top \!v_2 +  v_2\;^\top \!v_1)J \in \f{sp}(2n), $$ 
$$\f v \cong U_j \otimes V_{2n},
\qquad
v\otimes u\in U_j \otimes V_{2n}
 \mapsto [u \;^\top \!v,0]\in \f v,$$
 $$ \f z \cong \sym^2 U_j, 
\qquad u_1\cdot u_2 \in \sym^2 U_j
\mapsto u_1\;^\top \!u_2 +  u_2\;^\top \!u_1.
$$
(Here, we identify the $j\times 2n$ matrix $u \;^\top \!v$
 with a pair of $j \times n$ matrices $\nu, \mu$ 
 to obtain an element of $\f v$ in the usual form.)

The vector space $V_{2n}$ is the direct sum
of two $n$-dimensional isotropic subspaces
$W, W^\perp$ such that
$\f l \leftrightarrow W \otimes U_j$ and 
$\f r \leftrightarrow W^\perp \otimes U_j.$
Specifically, $W$ is the span of the first $n$ standard 
basis vectors and $W^\perp$ is the span of he
last $n.$ 

Hence $v_1 \cdot v_2\in \sym^2 V_{2n}$ corresponds to a matrix 
of the form 
$$
\bpm X&0\\0& -^\top X\epm
$$
if and only if one of $v_1, v_2$ lies in $W$ and the 
other in $W^\perp.$
Without loss of generality one may assume $v_1 \in W$
and $v_2 \in W^\perp.$

Now, let $u_1, \dots, u_j$ denote the standard 
basis basis for $U_j.$  Then the mapping $T$ 
can be expressed in this notation as$$
T( v_1 \cdot v_2) 
= \sum_{i_1,i_2=1}^j 
(v_1\otimes u_{i_1})\wt{\b Z}_{i_1,i_2}
(v_2\otimes u_{i_2}).
$$
Furthermore,
$$[0,0,\zeta]\cdot v= 2\pi i \Tr( \c M \zeta )v
\implies \b Z_{ij} v = 4\pi i \c M_{ij} v.
$$
Finally, 
$$\begin{aligned}
\lambda\left((v_1\otimes u_{i_1})\wt{\b Z}_{i_1,i_2}
(v_2\otimes u_{i_2})\right)
= \frac 12\wt{\b Z}_{i_1,i_2}
\left((v_1\otimes u_{i_1})(v_2\otimes u_{i_2})
+(v_2\otimes u_{i_2})(v_1\otimes u_{i_1})
\right)\\
= \frac 12\wt{\b Z}_{i_1,i_2}
\left([(v_1\otimes u_{i_1}),(v_2\otimes u_{i_2})]
+2(v_2\otimes u_{i_2})(v_1\otimes u_{i_1})
\right).
\end{aligned}
$$
Now, tracing through the definitions, 
$[(v_1\otimes u_{i_1}),(v_2\otimes u_{i_2})]$
is precisely $\b Z_{i_1,i_2}.$
Hence
$$\sum_{i_1,i_2=1}^j 
\wt{\b Z}_{i_1,i_2}
[(v_1\otimes u_{i_1}),(v_2\otimes u_{i_2})]
= j(\det \b Z).
$$
On the other hand, $v_1 \in W \implies v_1 \otimes u_{i_1} 
\in \f l \implies (v_1 \otimes u_{i_1})\cdot v=0$
for all $1 \le i_1 \le j.$  
The result follows.
\end{proof}
\begin{defn}\label{def: b M+N and M+N}
Fix a positive integer $N.$  
Let $\hat{\b M}_{+,N}$ to be the $N\times N$ matrix 
whose $i,j$ entry is $E_{i,N+j}+E_{j,N+i} \in \f{sp}(2N) 
\subset U( \f{sp}(2n)).$  
Let $\hat M_{+,N}$ be the element of $U(\f{sp}(2N))$
obtained by taking the determinant of this matrix.  
\end{defn}
One may note that the entries of $\hat{\b M}_{+,N}$
all commute in $U(\f{sp}(2N)),$ so one does not 
need to be concerned about the orders in which the 
products are taken in taking the determinant.
Furthermore, $\hat{M}_{+,n+j}\in U(\jla)\subset U( \f{sp}(2n+2j)).$ 
\begin{lem}\label{lemma relating M+n and M+n+j}
One has
$(\det \b Z)^j\b T(\hat M_{+,n}) = (\det \b Z)^n\hat M_{+, n+j}.$
\end{lem}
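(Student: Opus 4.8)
The plan is to prove this identity inside the commutative subalgebra of $U(\jla)$ generated by the entries of $\hat{\b M}_{+,n+j}$ (a polynomial ring, since those entries span an abelian subalgebra of $\f{sp}(2n+2j)$), by applying the Schur complement formula for determinants over a commutative ring to a block decomposition of $\hat{\b M}_{+,n+j}$.

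First I would write $\hat{\b M}_{+,n+j}$ in block form with respect to the splitting $\{1,\dots,n+j\} = \{1,\dots,n\}\sqcup\{n+1,\dots,n+j\}$ of its index set. In the embedding $\f{sp}(2n)\hookrightarrow\jla\hookrightarrow\f{sp}(2n+2j)$ the ambient $2(n+j)$-dimensional space is split into blocks of sizes $n,j,n,j$, with position $p$ symplectically paired to position $(n+j)+p$; so the entry $E_{p,(n+j)+q}+E_{q,(n+j)+p}$ of $\hat{\b M}_{+,n+j}$ lies in the ``$B$''-part of $\jla$ when $p,q\le n$, equals (up to transpose) an entry of $\b U$ when exactly one of $p,q$ exceeds $n$, and equals an entry of $\b Z$ when both do. Tracking the matrix-unit indices gives
$$
\hat{\b M}_{+,n+j} = \begin{pmatrix} \hat{\b M}_{+,n} & \tr\b U \\ \b U & \b Z\end{pmatrix},
$$
where the top-left block is the image of $\hat{\b M}_{+,n}$ under $\f{sp}(2n)\hookrightarrow\jla$, and $\b U,\b Z$ are exactly the matrices defined above; $\b V$ does not appear, because its entries lie in the complementary Lagrangian and hence outside the relevant abelian subalgebra. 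Since all entries of $\hat{\b M}_{+,n+j}$ commute, the Schur complement identity (over the localization at the nonzero central element $\det\b Z$) gives
$$
\hat M_{+,n+j} = \det\hat{\b M}_{+,n+j} = \det\b Z\cdot\det\!\big(\hat{\b M}_{+,n} - (\det\b Z)^{-1}\,\tr\b U\,\wt{\b Z}\,\b U\big).
$$

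Next I would compute $\b T$ on the entries of $\hat{\b M}_{+,n}$. From the concrete formula for $T$, together with the symmetry of the matrix $\tr\b U\,\wt{\b Z}\,\b U$, one gets $T(E_{p,n+q}+E_{q,n+p}) = 2\,(\tr\b U\,\wt{\b Z}\,\b U)_{pq}$. Since the entries of $\b U$ lie in the commutative ``$u$''-subalgebra $\f l$ and the entries of $\wt{\b Z}$ are central, symmetrization acts trivially on these products, so $\lambda\big(T((\hat{\b M}_{+,n})_{pq})\big) = 2\,(\tr\b U\,\wt{\b Z}\,\b U)_{pq}$, and therefore, as matrices over $U(\jla)$,
$$
\b T\hat{\b M}_{+,n} = 2\det\b Z\cdot\hat{\b M}_{+,n} - 2\,\tr\b U\,\wt{\b Z}\,\b U = 2\det\b Z\cdot\big(\hat{\b M}_{+,n} - (\det\b Z)^{-1}\,\tr\b U\,\wt{\b Z}\,\b U\big);
$$
in other words, $\b T\hat{\b M}_{+,n}$ is $2\det\b Z$ times the numerator of the Schur complement appearing above. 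Taking determinants of this matrix identity and substituting into the previous display gives $\hat M_{+,n+j} = \det\b Z\cdot\det\!\big((2\det\b Z)^{-1}\b T\hat{\b M}_{+,n}\big)$; clearing the resulting powers of $2\det\b Z$ — which is exactly what the conventions extending $\b T$ from $\f{sp}(2n)$ to $U(\f{sp}(2n))$ do to $\hat M_{+,n}=\det\hat{\b M}_{+,n}$ — yields the asserted identity $(\det\b Z)^j\,\b T(\hat M_{+,n}) = (\det\b Z)^n\,\hat M_{+,n+j}$.

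I expect the main obstacle to be the first step: the index bookkeeping for the block decomposition, in particular confirming that the off-diagonal blocks are $\b U$ and $\tr\b U$ (and not $\b V$, which sits in the opposite Lagrangian) and that the corner block is precisely $\b Z$, with its doubled diagonal entries correctly accounted for. Once that is settled, the remainder is just the Schur complement formula together with the evaluation of $T$ on symmetric-matrix arguments, which is essentially the computation already carried out in Proposition~\ref{prop: invariance of T}.
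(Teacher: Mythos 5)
Your argument is correct and is essentially the paper's own proof: the same block decomposition $\hat{\b M}_{+,n+j}=\bspm \hat{\b M}_{+,n}&\tr\b U\\ \b U&\b Z\espm$, the same Schur-complement elimination (which the paper carries out by multiplying on the left by $\bspm \det\b Z\cdot I_n&-\tr\b U\wt{\b Z}\\ 0&I_j\espm$ rather than localizing at $\det\b Z$), and the same entrywise evaluation of $\b T$ on $\hat{\b M}_{+,n}$. The one point of divergence is the factor of $2$: your computation $\b T(\hat{\b M}_{+,n})=2\bigl(\det\b Z\,\hat{\b M}_{+,n}-\tr\b U\,\wt{\b Z}\,\b U\bigr)$ is the honest one, while the paper records this identity without the $2$, so your closing remark about absorbing the resulting powers of $2\det\b Z$ into the convention extending $\b T$ to $U(\f{sp}(2n,\C))$ addresses a normalization the paper simply elides.
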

\begin{proof}
Let $\b U$ be the $j \times n$ matrix with 
$i,k$ entry equal to 
$
[0,E_{i,k},0]\in U( \f h^{(n,j)}) \subset U( \f{sp}(2n+2j)).
$
Then 
$$
\hat{\b M}_{+,n+j}
= \bpm 
\hat{\b M}_{+,n} & ^\top  \b U\\
\b U& \b Z
\epm.
$$
The mapping $\b T:U( \f{sp}(2n))\to U(\jla)$
induces a mapping 
$\Mat_{n\times n} (U( \f{sp}(2n))) \to \Mat_{n\times n} (U( \jla))$
which we denote by the same symbol $\b T.$ 
Then 
$\b T(\hat{\b M}_{+,n}) 
= \det \b Z \hat{\b M}_{+,n} - \b U \wt{\b Z}{}\;^\top \b U.$
Hence 
$$\begin{aligned}
\det \b Z^n \cdot\hat M_{+, n+j}&= \det 
\bpm
\det \b Z\cdot I_n&-{}^\top \b U\wt{\b Z} \\
0&I_j
\epm
\bpm 
\hat{\b M}_{+,n} & ^\top  \b U\\
\b U& \b Z
\epm\\
&= \det\bpm 
\det \b Z\cdot \hat{\b M}_{+,n} - {}^\top  \b U\wt{\b Z}\b U&0\\
\b U& \det \b Z\cdot I_j
\epm\\
&=
(\det \b Z)^j\det \b T(\hat{\b M}_{+,n}),
\end{aligned}
$$
which gives the result.
\end{proof}

\begin{proof}[Proof of theorem \ref{Jacobi U(g) version of Conjecture 2}]
Theorem \ref{Jacobi U(g) version of Conjecture 2}
now follows from proposition \ref{U(g) version of Conjecture 2}.  Regard $V$ as a $U(\f{sp}(2n, \C))$
module with the action $*.$ 

It follows from lemma \ref{lem:  holomorphic for . implies holomorphic for *} that $v_0$ is holomorphic.
By \ref{lemma relating . and * for Siegel Levi} it
is of weight $-r+\frac{n+1}2.$
Therefore, by proposition \ref{U(g) version of Conjecture 2}, 
$\hat M_{+,n}^r*v_0$ is holomorphic
and of weight $r+\frac{n+1}2$
relative to the action $*.$
By lemma \ref{lemma relating M+n and M+n+j}, 
and the fact that $\c M$ is assumed positive 
definite, the same is true of $\hat M_{+,n+j}^r\cdot v_0.$
Lemma \ref{lemma relating . and * for Siegel Levi}
shows that $\hat M_{+,n+j}^r\cdot v_0$
is of weight $r+\frac{n+j+1}2$ relative to the action $\cdot.$
Now, 
$$
\bpm X&0\\0&-{}^\top\!\!X\epm 
 \hat M_{+, n+j} - \hat M_{+,n+j} \bpm X&0\\0&-{}^\top\!\! X\epm 
= \Tr X \hat M_{+, n+j}, 
\qquad (\forall X \in \f{gl}(n+j,\C)).
$$
It immediately follows that 
$$\bpm 
0&0&0&0\\\lambda&0&0&0\\ 0&0&0&-{}^\top\!\! \lambda\\
0&0&0&0
\epm\hat M_{+, n+j}^k \cdot v_0
=0, \qquad  \forall 
\lambda \in \Mat_{j\times n}\C,\;
k\in \N.$$
It follows from this, along with lemma \ref{lem:  holomorphic for . implies holomorphic for *} that 
$\hat M_{+, n+j}^k \cdot v_0$ is holomorphic
if and only if $k=r.$
\end{proof}

\end{document}